\numberwithin{equation}{section}
\newtheorem{Theorem}{Theorem}[section]
\newtheorem{theorem}[equation]{Theorem}
\newtheorem{Lemma}[equation]{Lemma}
\newtheorem{lemma}[equation]{Lemma}
\newtheorem{Remark}[equation]{Remark}
\newtheorem{proposition}[equation]{Proposition}
\theoremstyle{definition}
\newtheorem{definition}[equation]{Definition}
\def\dist{{\rm dist}\ }
\def\good{{\rm good}\ }
\def\bbZ{\mathbb{Z}}
\def\bbR{\mathbb{R}}
\begin{document}

\title[A Two weight theorem for square functions]{Two weight norm inequalities for the $ g$ function}

\subjclass[2010]{42B20, 42B25}

\author{Michael T Lacey}   %  can use \and
\address{ School of Mathematics, Georgia Institute of Technology, Atlanta GA 30332, USA}
\email {lacey@math.gatech.edu}
\thanks{Research supported in part by grant NSF-DMS 0968499 and 1265570, a grant from the Simons Foundation (\#229596 to Michael Lacey),
and the Australian Research Council through grant ARC-DP120100399.}

\author{Kangwei Li}
\address{School of Mathematical Sciences and LPMC,  Nankai University,
      Tianjin~300071, China}
\email{likangwei9@mail.nankai.edu.cn}

\begin{abstract}
Given two weights  $ \sigma , w$ on $ \mathbb R ^{n}$, the classical $ g$-function satisfies the norm inequality
$ \lVert g (f\sigma)\rVert_{L ^2 (w)} \lesssim \lVert f\rVert_{L ^2 (\sigma )}$ if and only if the two weight Muckenhoupt
$ A_2$ condition holds, and a family of testing conditions holds, namely
\begin{equation*}
\iint _{Q (I)} (\nabla P_t (\sigma \mathbf 1_I)(x, t))^2 \; dw \, t dt \lesssim \sigma (I)
\end{equation*}
uniformly over  all cubes $ I \subset \mathbb R ^{n}$, and $ Q (I)$ is the Carleson box over $ I$.
A corresponding characterization for the intrinsic square function of Wilson also holds.
\end{abstract}

\keywords{
two weight inequalities; square functions}
\maketitle
\section{Introduction and Main Results}
By a \emph{weight} we mean a positive Borel locally finite measure on $\bbR^n$. We consider the two weight norm inequality for the classical $g$-function for a pair of weights $(w,\sigma)$ on $\bbR^n$:
\begin{equation}\label{e:g}
  \|g(f\sigma)\|_{L^2(w)}\le \mathcal G\|f\|_{L^2(\sigma)}.
\end{equation}
Here and throughout, $ \mathcal G$ denotes the best constant in the inequality.
Recall that the classical $g$-function is defined by
\begin{gather*}
  g(f)(x)=\left(\int_0^\infty |\nabla P_t f(x,t)|^2 t dt\right)^{1/2},
\\
    |\nabla P_t f (x,t)|^2:=|\frac{\partial }{\partial t} P_t f |^2+|\nabla_x P_t f|^2
\\
\textup{and} \quad   P_t f(x)= \frac{\Gamma(\frac{n+1}{2})}{\pi^{(n+1)/2}}\int _{\mathbb R ^{n}} f (x-y) \frac{t^{-n}}{(1+|y/t|^2)^{(n+1)/2}} \, dy.
   \end{gather*}
The main result is a characterization of the two weight inequality, as follows.

%%%%%%%%%%%%%%%%%%%%%%%%%%%%%% THEOREM THEOREM THEOREM
\begin{theorem}\label{t:g} For two weights $\sigma $, and $ w$, the inequality \eqref{e:g} holds if and only if these two conditions hold,
uniformly over all cubes $ I \subset \mathbb R ^{n}$,
\begin{align}\label{e:A2}
\frac {\sigma (I)} {\lvert  I\rvert } \cdot \frac {w (I)} {\lvert  I\rvert } &\le \mathscr A_2,
\\ \label{e:testing}
\iint _{Q (I)} (\nabla P_t (\sigma \mathbf 1_I)(x, t))^2 \; dw \,  t dt & \le \mathscr T ^2 \sigma (I) , \qquad
Q (I):= I \times [0, l (Q)].
\end{align}
Moreover, letting $ \mathscr A_2$ and $ \mathscr T$ denote the best constants above, there holds $ \mathcal G \simeq
\mathscr A_2 ^{1/2} + \mathscr T := \mathscr N$.
\end{theorem}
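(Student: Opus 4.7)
The testing condition \eqref{e:testing} is immediate from \eqref{e:g} with $f=\mathbf 1_I$, since its left-hand side is dominated by $\|g(\sigma\mathbf 1_I)\|_{L^2(w)}^2\le\mathcal G^2\sigma(I)$. For \eqref{e:A2}, I would use the pointwise lower bound $|\nabla P_t(\sigma\mathbf 1_I)(x,t)|\gtrsim\sigma(I)/\ell(J)^{n+1}$ valid on $(x,t)\in J\times[\ell(J)/2,\ell(J)]$ whenever $\ell(I)\simeq\ell(J)\simeq\mathrm{dist}(I,J)$, then square and integrate against $t\,dw\,dt$ over the Whitney slab to extract $\sigma(I)w(J)\lesssim\mathcal G^2|J|^2$; a standard adjacency and covering argument upgrades this to \eqref{e:A2} for every cube.

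\textbf{Sufficiency: discretization and far part.} For the hard direction I would write the left side of \eqref{e:g} as $\iint_{\bbR^n\times(0,\infty)}|\nabla P_t(f\sigma)|^2 t\,dw\,dt$ and partition the upper half-space into Whitney slabs $W_J=J\times[\ell(J)/2,\ell(J)]$ indexed by a random dyadic grid $\D$, employing the Nazarov--Treil--Volberg good/bad formalism so that only \emph{good} cubes need be analyzed. For each $J$ I split the Poisson argument into near and far parts relative to an enlargement $J^\ast$: $f\sigma=f\sigma\mathbf 1_{J^\ast}+f\sigma\mathbf 1_{(J^\ast)^c}$. The far piece is controlled by a Carleson-type argument using the smoothness of $\nabla P_t$ together with the Poisson tail estimate $P_t(\sigma\mathbf 1_{(J^\ast)^c})(x)\lesssim\sum_{k\ge 0}2^{-k}\sigma(2^kJ)/|2^kJ|$ and the $A_2$ hypothesis \eqref{e:A2}.

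\textbf{Corona decomposition of the near part.} For the near part I would run a corona decomposition of $f$: build a stopping family $\Sp\subset\D$ by halting whenever the $\sigma$-average of $|f|$ doubles, and replace $f$ on each corona by its stopping average $\langle f\rangle_S^\sigma$. The near form then splits into (i) a paraproduct in which $\langle f\rangle_S^\sigma\mathbf 1_S$ appears, bounded cube-by-cube directly by the testing condition \eqref{e:testing} via a Carleson embedding on the stopping tree; (ii) a stopping/error form registering the fluctuation of $f$ within each corona; and (iii) off-diagonal cross terms between distinct coronas, which I would absorb into $\mathscr A_2$ by quasi-orthogonality of the stopping projections.

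\textbf{Main obstacle.} The stopping form in (ii) is the chief difficulty, since it is not directly dominated by either hypothesis. My plan is to apply the recursive strategy of Lacey and Lacey--Sawyer--Uriarte-Tuero: peel off a single generation of the stopping tree, show that the restriction of the form to a stopped child has the same structure as the original one but with a geometric decay factor coming from the doubling jump in $\langle f\rangle^\sigma$, and iterate to sum a convergent series bounded by $\mathscr A_2^{1/2}+\mathscr T$. Adding this to the paraproduct, far, and off-diagonal bounds yields $\mathcal G\lesssim\mathscr N$, which together with the necessity direction gives the equivalence $\mathcal G\simeq\mathscr N$.
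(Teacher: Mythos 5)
Your necessity argument and the overall discretization (random grids, goodness, Whitney regions, near/far splitting via $\mathscr A_2$, corona and paraproduct structure) track the paper closely. But there is a genuine gap in how you propose to handle the stopping/error form, and it sits exactly at the paper's central point. The key observation here (Lemma~\ref{l:pivotal}) is that for the square function the conditions \eqref{e:A2} and \eqref{e:testing} already \emph{imply} a Nazarov--Treil--Volberg pivotal condition: for $(x,t)$ in the Whitney box over $K$ one has $t\,\partial_t P_t(\mathbf 1_{I^0\setminus CK}\sigma)(x)\gtrsim P(K,\mathbf 1_{I^0\setminus CK}\sigma)$, because after removing the local part the kernel is essentially positive, so the sums $\sum_K P(K,\mathbf 1_{I^0}\sigma)^2w(K)$ over Whitney families of a partition are dominated by $(\mathscr A_2+\mathscr T^2)\sigma(I^0)$. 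Consequently the stopping family must carry a \emph{second} stopping criterion --- stop when $\sum_{K\in\mathcal W_I}P(K,\mathbf 1_S\sigma)^2w(K)\ge C_0\mathscr P^2\sigma(I)$ --- and then both the local term (tails $\mathbf 1_{S\setminus R^{(k-1)}}$ inside a corona) and the global term (coronas separated in the stopping tree) are closed directly by this pivotal bound combined with the gain $l(R)/l(K)$ from goodness (Lemma~\ref{l:good-gain}). Your corona stops only on doubling of the $\sigma$-averages of $|f|$, so you have no a priori control of these Poisson sums.

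Your proposed substitute --- the recursive stopping-form analysis of Lacey and Lacey--Sawyer--Shen--Uriarte-Tuero --- does not transfer. That recursion is built on the energy condition and the monotonicity principle for the Hilbert kernel; moreover the geometric decay you invoke does not come from the doubling jump of $\langle|f|\rangle^\sigma$ (doubling yields only the quasi-orthogonality $\sum_S\tau(S)^2\sigma(S)\lesssim\|f\|_\sigma^2$, not off-diagonal decay). For the square function the quantity that naturally controls the out-of-corona contributions over a Whitney region is precisely $P(K,\sigma\mathbf 1_S)^2w(K)$, i.e., the pivotal quantity, and without first proving $\mathscr P\lesssim\mathscr A_2^{1/2}+\mathscr T$ you cannot sum the stopping form. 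Once that implication is established, no recursion is needed at all --- which is what makes this problem, unlike the Hilbert transform, characterizable by testing and $A_2$ alone.
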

%%%%%%%%%%%%%%%%%%%%%%%%%%%%%% THEOREM THEOREM THEOREM

In general, there are substantive convergence issues in the two weight setting. Let us discuss them here.
We can assume that $ \sigma $ and $ w$ are restricted to some large cube.  For general $ f\in L ^2 (\sigma )$,
we can write it as the difference of its positive and negative parts, $ f= f_+ - f_-$.
Then, $ f _{\pm} \cdot \sigma $ is also a weight, i.e.\thinspace it is locally finite. Now, for each fixed $ t_0>0$, and all $t> t_0 $
the term $ \nabla P_ t (\sigma f _{\pm})$ is an integral against $ \sigma  \cdot f _{\pm}$, against a bounded (and signed) function,
on a compact set.  Hence it has an unambiguous definition. And we take $\nabla P_ t (\sigma f ) = \nabla P_ t (\sigma f _{+} )
- \nabla P_ t (\sigma f _{-}) $.  Thus, the integral $ \int _{\mathbb R ^{n}}\int _{t_0} ^{\infty }  |\nabla P_t f (x,t)|^2 t\; dt w (dx)$
has an unambigious definition.  Our estimates will be independent of the choice of $ t_0$, so we suppress it below.

The condition \eqref{e:A2} is the famous Muckenhoupt condition, in the two weight setting, and the condition \eqref{e:testing} is the
Sawyer testing condition, so named because of the foundational results of Eric Sawyer on the two weight theorems for the
maximal function \cite{Saw1} and the fractional integrals \cite{Saw2}.
Two weight inequalities for dyadic operators are relatively advanced, but their continuous versions are much harder.
Innovative work of Nazarov-Treil-Volberg on non-homogenous harmonic analysis \cites{NTV, NTV3,V}
renewed attention on these questions, and in particular the paper \cite{NTV3}, giving sufficient conditions for a two weight
inequality for singular integrals, has proven to be quite influential, in its use of the so-called \emph{pivotal condition.}

The critical observation in this paper is Lemma~\ref{l:pivotal}, which shows that the sufficient conditions \eqref{e:A2} and \eqref{e:testing} imply
a pivotal condition, like that introduced by Nazarov-Treil-Volberg \cite{NTV3}.
This is then combined with the averaging over good Whitney regions of  Martikainen-Mourgoglou \cite{MM},
followed by a  stopping time construction from \cite{LSSU1}. Also see \cite{LM} for the application of these two ideas in the local $ Tb$ setting.

\smallskip
In the setting of $ A_2$, and for $ A_p$ weights, the sharp estimates are due to Wittwer \cite{MR1897458}, and Lerner \cite{Ler3}, respectively.
The dyadic two weight theorem of the square function is a direct analog of our main theorem, and is easy to obtain. As far as we know, this is the
first general for a continuous two weight theorem for the square function known.

There was some speculation that the pivotal condition would be the main tool in the analysis of the Hilbert transform.
While not true, it turned out to be a crucial hint to determining that the correct condition is the so-called energy condition of \cite{LSU1}.
(This paper, by example, shows that the pivotal condition is not necessary from the boundedness of the Hilbert transform. This and much more is
	in \cite{Lac1}.)
The energy condition is crucial condition for the study of the two weight inequality for the Hilbert transform, as was shown in
\cites{LSSU1,Lac}.  On the other hand, the usefulness of the concept of energy for the study of other singular integrals is in doubt
due to the examples in \cite{SSU1}.

\smallskip

We first discuss the use of random dyadic grids, and martingale expansions, followed by  the necessity of the $ A_2$ condition, and then, in the sufficient direction, that the pivotal condition 
is also necessary from the testing and $ A_2$ condition. The next two sections concern the proof of sufficiency,
and the final section briefly outlines a corresponding result for the intrinsic square function.

%%%%%%%%%%%%%%%%%%%%%%%%%%%%%% SECTION  SECTION SECTION
\section{Random Grids, Martingales}

We recall the  function class $\mathcal{U}_{1,1}$,  of Wilson \cite[Page 114]{W}.
We call $\psi\in \mathcal{U}_{1,1}$ if $\psi$ satisfies the following
\begin{enumerate}
\item $\int_{\bbR^n} \psi(x)dx=0$;
\item $|\psi(x)|\le C(1+|x|)^{-n-1}$;
\item $|\psi(x)-\psi(x')|\le C|x-x'|((1+|x|)^{-n-2}+(1+|x'|)^{-n-2})$.
\end{enumerate}
It is easy to check that $\partial_t P _t \vert _{t=1} , \partial_{x_i} P_1 \in \mathcal{U}_{1,1}$, $i=1,\cdots,n$, and so to prove the main theorem, it
suffices to show the inequality below, where $ \psi _t (y) = t ^{-n} \psi (y/t)$,
\begin{equation}\label{e:suffices}
\int _{\mathbb R ^{n}}\int _{0} ^{\infty }  \lvert   \psi _t \ast (f \cdot \sigma )\rvert ^2 \frac {dt}t \, dw
\lesssim \mathscr N ^2 \lVert f\rVert_{\sigma } ^2 .
\end{equation}
Here, and throughout, we abbreviate $ \lVert f\rVert_{L ^2 (\sigma )}$ to $ \lVert f\rVert_{\sigma }$, since we are only interested in $ L ^2 $ norms.
\smallskip

We use random dyadic grids, the fundamental technique.  Let us denote the
random dyadic grid $\mathcal{D}=\mathcal{D}(\beta)$, where
$\beta=(\beta_j)_{j=-\infty}^\infty\in (\{0,1\}^n)^{\bbZ}$. That is,
\[
  \mathcal{D}=\Bigl\{Q+\sum_{j:2^{-j}<l(Q)}2^{-j}\beta_j: Q\in\mathcal{D}_0\Bigr\},
\]
 where
$\mathcal{D}_0$ is the standard dyadic grid of $\bbR^n$. Let $0<\gamma = \frac 1 {2(n+1)}<1$.
A cube $Q\in\mathcal{D}$ is called \emph{bad} if there exists another cube
$\tilde{Q}\in\mathcal{D}$ so that $l(\tilde{Q})\ge 2^r l(Q)$ and
$d(Q,\partial \tilde{Q})\le l(Q)^{\gamma}l(\tilde{Q})^{1-\gamma}$. Otherwise it
is \emph{good}. Note that $\pi_{\good}:=\mathbb{P}_\beta(\mbox{$Q+\beta$ is good})$
is independent of $Q\in\mathcal{D}_0$. The parameter $r$ is a fixed integer sufficiently large such that
$\pi_{\good}>0$.

Then, following \cite{MM},  we show that the integral in \eqref{e:suffices} equals
\begin{equation}\label{e:goodWR}
\int_0^\infty\int_{\bbR^n}|\psi_t*(f\sigma)|^2 \; w (dx) \frac{dt}{t}\\
=\frac{1}{\pi_{\good}}\mathbb{E}_\beta\sum_{R\in\mathcal{D}_{\good}}
\iint_{W_{R}}|\psi_t*(f\sigma)|^2 \; w(dx)\frac{dt}{t},
\end{equation}
where $W_R=R\times (l(R)/2, l(R)]$ is a Whitney region.
Then, our task is to show that for each $ \beta $, the sum on the right obeys the estimate in \eqref{e:suffices}.
In fact, with the monotone convergence theorem, it suffices to show that there exists a constant $C>0$ such that for any $s\in \mathbb N$,
we have
\begin{equation}\label{eq:monotone}
\sum_{R\in\mathcal{D}_{\good}\atop l(R)\le 2^s}
\iint_{W_{R}}|\psi_t*(f\sigma)|^2 \; w(dx)\frac{dt}{t}\le C \mathscr N^2 \|f\|_\sigma^2.
\end{equation}
Below, it is always understood that we only consider good cubes, and this is suppressed in the notation. The reduction to
\emph{good} Whitney regions is very useful, and we comment on it when it is used.

To see \eqref{e:goodWR}, write
\begin{align*}
\textup{LHS} \eqref{e:goodWR}
&=\mathbb{E}_\beta\sum_{R\in\mathcal{D}_0}\iint_{W_{R+\beta}} |\psi_t*(f\sigma)|^2 \; w(dx)\frac{dt}{t}
\\
&=\frac{1}{\pi_{\good}}\sum_{R\in\mathcal{D}_0}\mathbb{E}_\beta (1_{\good}(R+\beta))
\mathbb{E}_\beta \iint_{W_{R+\beta}}|\psi_t*(f\sigma)|^2 \; w(dx)\frac{dt}{t}
\\
&=\frac{1}{\pi_{\good}}\sum_{R\in\mathcal{D}_0}\mathbb{E}_\beta \left(1_{\good}(R+\beta)
\iint_{W_{R+\beta}}|\psi_t*(f\sigma)|^2\; w(dx)\frac{dt}{t}\right)
\\
&=\frac{1}{\pi_{\good}}\mathbb{E}_\beta\sum_{R\in\mathcal{D}_{\good}}
\iint_{W_{R}}|\psi_t*(f\sigma)|^2\;w(dx)\frac{dt}{t},
\end{align*}
where the independence of the position of $R+\beta$ and the goodness of $R+\beta$ is used (see \cite[Page 1479]{2912709}).

\medskip

Next we introduce the martingale decomposition. Define
\begin{gather*}
  E_Q^\sigma f:=\frac{1}{\sigma(Q)}\int_Q f\; d\sigma,
\end{gather*}
assuming that $ \sigma (Q)>0$, otherwise set it to be zero.
For the martingale differences,
\begin{gather*}
  \Delta_Q^\sigma f:=\sum_{\substack{Q'\in \textup{ch}(Q)}}  (E_{Q'}^\sigma f -E_Q^\sigma f)1_{Q'}.
\end{gather*}
(So if $ \sigma $ charges only one child of $ Q$, the martingale difference is identically zero.)
For fixed $s\in\mathbb N$,
 by Lebesgue differentiation theorem, we can write
\[
  f=\sum_{Q\in\mathcal{D}\atop l(Q)\le 2^s}\Delta_Q^\sigma f
  +\sum_{Q\in\mathcal{D}\atop l(Q)=2^s}(E_Q^\sigma f) 1_Q .
\]
By  orthogonality, we have
\[
  \|f\|_\sigma^2=\sum_{Q\in\mathcal{D}\atop l(Q)\le 2^s}\|\Delta_Q^\sigma f\|_\sigma^2
  +\sum_{Q\in\mathcal{D}\atop l(Q)= 2^s}\|(E_Q^\sigma f) 1_Q\|_\sigma^2.
\]

\smallskip

The final reduction of this section is  as follows. We can assume that $ f \in L ^2 (\sigma )$ is compactly supported.
With probability 1, the support of $ f$ is contained in a sequence of dyadic cubes in the random grid,
which increase to $ \mathbb R ^{n}$.  By monotone convergence, it suffices to consider a single dyadic cube $ Q ^0$,
of side length $ 2 ^{s}$,
 containing the support of $ f$, and bound the term below.
 \begin{equation*}
\iint_{Q ^{0} \times [0, 2 ^{s}]}\big| \psi_t*(f\sigma)\big|^2w(dx)\frac{dt}{t}
\lesssim \mathscr N ^2  \|f\|_\sigma^2.
\end{equation*}
Using the testing inequality, we can further assume that $ f$ has $ \sigma $-integral zero.
Then, $ f$ is in the linear span of the martingale differences, and we need only prove
\begin{align}\label{eq:e3}
 \sum_{R\in\mathcal{D}_{\good}\atop l(R)\le 2^s}
\iint_{W_{R}}\bigg|\sum_{Q\in\mathcal{D}\atop l(Q)\le 2^s}\psi_t*(\Delta_Q^\sigma  f \cdot \sigma)\bigg|^2w(dx)\frac{dt}{t}
\lesssim \mathscr N ^2  \|f\|_\sigma^2,
\end{align}
where  recall that $ W_R=R\times (l(R)/2, l(R)]$.

%%%%%%%%%%%%%%%%%%%%%%%%%%%%%% SECTION  SECTION SECTION
\section{Necessary Conditions}

First, we show that the Muckenhoupt $ A_2$ condition is necessary for the assumed norm inequality.

%%%%%%%%%%%%%%%%%%%%%%%%%%%%%% LEMMA LEMMA LEMMA
\begin{lemma}\label{l:A2}   The inequality \eqref{e:A2} holds, assuming the norm boundedness condition \eqref{e:g}.
\end{lemma}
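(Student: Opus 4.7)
The plan is to test the norm inequality \eqref{e:g} on $f = \mathbf 1_I$, giving immediately
\[
\iint |\nabla P_t(\sigma\mathbf 1_I)(x,t)|^2 \, t\,dt\,dw(x) \le \mathcal G^2 \sigma(I),
\]
and then to bound the left-hand side from below by something of the form $\sigma(I)^2 w(I)/|I|^2$. That lower bound will be produced by a pointwise estimate on $g(\sigma\mathbf 1_I)(x)$ for $x \in I$, obtained by looking only at a single dyadic window of the time variable $t$ and only at the $\partial_t$-component of $\nabla P_t$.

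For the pointwise bound I work with the $t$-derivative of the Poisson kernel $p_t(y) = c_n t (t^2+|y|^2)^{-(n+1)/2}$. A direct computation gives
\[
\partial_t p_t(y) = c_n \,\frac{|y|^2 - n t^2}{(t^2+|y|^2)^{(n+3)/2}},
\]
and the decisive point is that once $t$ is a bit larger than $\mathrm{diam}(I)$ -- say $t \in [2\sqrt n\, l(I), 4\sqrt n\, l(I)]$ -- this expression is uniformly negative for every $y$ with $|y| \le \mathrm{diam}(I)$, with magnitude $\gtrsim t^{-n-1}$. Hence for every $x \in I$ the contributions in
\[
\partial_t P_t(\sigma\mathbf 1_I)(x) = \int_I \partial_t p_t(x-y)\, d\sigma(y)
\]
all carry the same sign, so $|\partial_t P_t(\sigma\mathbf 1_I)(x)| \gtrsim t^{-n-1}\sigma(I)$ with no possibility of cancellation.

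Since $|\nabla P_t|^2 \ge |\partial_t P_t|^2$, integrating against $t\,dt$ over the dyadic window of length $\sim l(I)$ gives the key pointwise bound $g(\sigma\mathbf 1_I)(x)^2 \gtrsim \sigma(I)^2/|I|^2$ for every $x \in I$. Integrating against $dw$ over $I$ and combining with the testing estimate from \eqref{e:g} then yields
\[
\mathcal G^2 \sigma(I) \;\ge\; \int_I g(\sigma\mathbf 1_I)^2\,dw \;\gtrsim\; \frac{\sigma(I)^2 w(I)}{|I|^2},
\]
which, after dividing by $\sigma(I)$, is exactly \eqref{e:A2} with $\mathscr A_2 \lesssim \mathcal G^2$. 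There is no real obstacle: the whole argument is a direct testing computation, and the only point requiring any care is choosing the time window large enough that $\partial_t p_t(x-y)$ is sign-definite for all $x,y\in I$.
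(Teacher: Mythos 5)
Your argument is correct and is essentially the paper's own proof: test on $f=\mathbf 1_I$, keep only the $\partial_t$-component, and exploit that $\partial_t p_t(x-y)$ is sign-definite with magnitude $\gtrsim t^{-n-1}$ on a time window comparable to $l(I)$, yielding $g(\sigma\mathbf 1_I)^2\gtrsim \sigma(I)^2/|I|^2$ on $I$. The only difference is cosmetic (you take the window $[2\sqrt n\,l(I),4\sqrt n\,l(I)]$ where the paper uses $[2l(I),4l(I)]$, and you spell out the no-cancellation point that the paper leaves implicit).
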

%%%%%%%%%%%%%%%%%%%%%%%%%%%%%% LEMMA LEMMA LEMMA
%%%%%%%%%%%%%%%%%%%%%%%%%%%%%% PROOF PROOF PROOF
\begin{proof}
Since
\[
  \partial_t P_t(x)=c_n\frac{|x|^2-nt^2}{(t^2+|x|^2)^{(n+3)/2}},
\]
we have
\begin{eqnarray*}
|\nabla u(x,t)|&\ge& |\partial_t P_t*f(x)|\\
&=&\left|\int_{\bbR^n}c_n\frac{|x-y|^2-nt^2}{(t^2+|x-y|^2)^{(n+3)/2}}f(y)dy\right|.
\end{eqnarray*}
For some fixed cube $Q$, let $f(y)=\mathbf{1}_Q(y)$.
Then for $x\in Q$,
\begin{eqnarray*}
g(f\sigma)(x)^2&\ge& \int_{2l(Q)}^{4l(Q)}\left|\int_{Q}c_n\frac{|x-y|^2-nt^2}{(t^2+|x-y|^2)^{(n+3)/2}}\sigma(dy)\right|^2 tdt
\gtrsim \frac{\sigma(Q)^2}{|Q|^2}.
\end{eqnarray*}
Now by the boundedness of $g(\cdot\sigma)$ from $L^2(\sigma)$ to $L^2(w)$, we get
\begin{eqnarray*}
\frac{\sigma(Q)^2}{|Q|^2} w(Q)
\lesssim \|g(f\sigma)\|_{L^2(w)}^2
\le\|g(\cdot\sigma)\|_{L^2(\sigma)\rightarrow L^2(w)}^2\sigma(Q),
\end{eqnarray*}
which is exactly the bound
$
  [w,\sigma]_{A_2}= \mathscr A_2 \lesssim  \mathcal  G ^2
$.

\end{proof}
%%%%%%%%%%%%%%%%%%%%%%%%%%%%%% PROOF PROOF PROOF

\begin{Remark}
In fact, a stronger half-Poisson condition is necessary, namely
\begin{equation}\label{eq:e15}
\int_{\bbR^n}\frac{l(Q)^2}{(l(Q)+\dist(y, Q))^{2(n+1)}}\sigma(dy) \cdot w(Q)\lesssim \|g(\cdot\sigma)\|_{L^2(\sigma)\rightarrow L^2(w)}^2.
\end{equation}
Indeed,  let
\[
  f_0(y)=\frac{l(Q)}{(l(Q)+\dist(y, Q))^{n+1}}\mathbf 1_{\bbR^n\setminus 4\sqrt{n}Q}(y).
\]
Then for $x\in Q$ and $0<t<l(Q)$, by similar calculation, we can get
\[
  \int_{\bbR^n\setminus 4\sqrt{n}Q}\frac{l(Q)^2}{(l(Q)+\dist(y, Q))^{2(n+1)}}\sigma(dy)w(Q)\lesssim \|g(\cdot\sigma)\|_{L^2(\sigma)\rightarrow L^2(w)}^2.
\]
Combining this with the Muckenhoupt $A_2$ condition, we get the Poisson type condition.
\end{Remark}

Next we introduce the \emph{pivotal condition}.   We will need these preliminaries.  Below, we refer to dyadic grids, and the `good' integer $ r$,
and exponent $ \gamma $.

%%%%%%%%%%%%%%%%%%%%%%%%%%%%%%  DEFINITION DEFINITION DEFINITION
\begin{definition}\label{d:whitney} Given a dyadic cube $ I$, we set $ \mathcal W_I$ to be the maximal dyadic cubes $ K\subset I$
such that $ 2 ^{r} l (K) \le l (I)$ and $ \textup{dist} (K, \partial I) \ge  l (K) ^{\gamma } l (I) ^{1- \gamma }$.
\end{definition}
%%%%%%%%%%%%%%%%%%%%%%%%%%%%%%  DEFINITION DEFINITION DEFINITION

Above, $ \mathcal W$ stands for `Whitney'.  The next easy example shows that the collections $ \mathcal W_I$ `capture' good cubes which are strongly
contained in $ I$, and that the collections are Whitney, in that expanded by a fixed amount, they have bounded overlaps.
%%%%%%%%%%%%%%%%%%%%%%%%%%%%%% PROPOSITION PROPOSITION PROPOSITION
\begin{proposition}\label{p:whitney}
%%  ENUMERATE
\begin{enumerate}
\item  For any good $ J \Subset I$,   there is a cube $ K\in \mathcal W_{I}$ which contains $ J$

\item  For any $ C>0$, provided $ r $ is sufficiently large, depending upon $ \gamma $,  there holds
\begin{equation*}
\sum_{K\in \mathcal W_I} \mathbf 1_{CK} \lesssim \mathbf 1_{I}
\end{equation*}
\end{enumerate}
%% ENUMERATE

\end{proposition}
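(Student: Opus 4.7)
The plan is to prove each item directly from the definitions. For (1), the goodness of $J$ places $J$ itself in the candidate family for $\mathcal W_I$, and maximality then produces the required ancestor. For (2), the inclusion $CK\subset I$ will follow from the lower Whitney bound once $r$ is large enough, while bounded overlap reduces to showing comparability of side lengths for any pair of overlapping cubes, which in turn follows from a two-sided Whitney bound forced by the maximality defining $\mathcal W_I$.

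For (1), interpret $J\Subset I$ in the usual Nazarov--Treil--Volberg sense, namely $J\subset I$ together with $2^{r} l(J)\le l(I)$. Applying the goodness of $J$ with the test cube $\tilde Q=I$ forces $\dist(J,\partial I)> l(J)^{\gamma} l(I)^{1-\gamma}$, so $J$ itself satisfies both defining conditions for the family whose maximal elements form $\mathcal W_I$. Passing to the largest dyadic cube containing $J$, contained in $I$, and still satisfying these conditions produces the desired $K\in\mathcal W_I$ with $J\subset K$.

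For (2), first note that for any $K\in \mathcal W_I$,
\begin{equation*}
\dist(K,\partial I)\ge l(K)^{\gamma}l(I)^{1-\gamma}=\Bigl(\frac{l(I)}{l(K)}\Bigr)^{1-\gamma}l(K)\ge 2^{r(1-\gamma)}l(K),
\end{equation*}
so choosing $r$ with $2^{r(1-\gamma)}\ge C$ yields $CK\subset I$. Next, the dyadic parent $\widehat K$ of $K$ must fail at least one of the two defining conditions. If $2^{r}l(\widehat K)>l(I)$, then $l(K)\ge 2^{-r-1}l(I)$; only $O(2^{rn})$ such cubes exist and they contribute boundedly. Otherwise $\dist(\widehat K,\partial I)<l(\widehat K)^{\gamma}l(I)^{1-\gamma}$, and the triangle inequality together with $l(K)\le l(K)^{\gamma}l(I)^{1-\gamma}$ (valid since $l(K)\le l(I)$) delivers the matching upper bound
\begin{equation*}
\dist(K,\partial I)\lesssim l(K)^{\gamma}l(I)^{1-\gamma}.
\end{equation*}

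The crux is then to show that if $K_1,K_2\in \mathcal W_I$ are of this second type with $CK_1\cap CK_2\neq\emptyset$, then $l(K_1)\sim l(K_2)$. Assuming $l(K_1)\le l(K_2)$, the triangle inequality and the two-sided Whitney bound give
\begin{equation*}
l(K_2)^{\gamma}l(I)^{1-\gamma}\lesssim l(K_1)^{\gamma}l(I)^{1-\gamma}+l(K_2).
\end{equation*}
Since $l(K_2)/l(I)\le 2^{-r}$, the last term is bounded by $2^{-r(1-\gamma)}l(K_2)^{\gamma}l(I)^{1-\gamma}$, which for $r$ large can be absorbed into the left hand side, producing $l(K_2)\lesssim l(K_1)$. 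Combined with pairwise disjointness of maximal dyadic cubes, this comparability restricts the number of $CK$ through any given point to $O_{C,n}(1)$, proving the claim. The main obstacle is precisely this absorption step; it is the only place that uses smallness of $l(K)/l(I)$ beyond the trivial containment estimate, and it is what fixes the quantitative size of $r$ in terms of $C$ and $\gamma$.
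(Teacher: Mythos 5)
Your proof is correct and follows essentially the same route as the paper: both parts hinge on the observations that a good $J\Subset I$ already satisfies the defining conditions of $\mathcal W_I$ (so it sits inside a maximal such cube), and that maximality forces the dyadic parent of $K\in\mathcal W_I$ to fail the distance condition, yielding a two-sided Whitney bound from which, after absorbing the error term via $l(K)\le 2^{-r(1-\gamma)}l(K)^{\gamma}l(I)^{1-\gamma}$ for $r$ large, overlapping dilates must have comparable side lengths. One small point: for the top-scale cubes whose parent fails the side-length condition, the overlap is bounded independently of $r$ because they are pairwise disjoint cubes of the common side length $2^{-r}l(I)$, so only $O_{C,n}(1)$ of the dilates $CK$ can contain a given point; quoting only the total count $O(2^{rn})$ would leave an $r$-dependent constant.
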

%%%%%%%%%%%%%%%%%%%%%%%%%%%%%% PROPOSITION PROPOSITION PROPOSITION

Here, $ J\Subset I$ means that $ J\subset I$ and   $ 2 ^{r} l (J) \le l (I)$; in words, \emph{$ J$ is strongly contained in $ I$}.
For part 2, we only need a constant that is a function of dimension.

%%%%%%%%%%%%%%%%%%%%%%%%%%%%%% PROOF PROOF PROOF
\begin{proof}
For part 1, observe that good intervals satisfy a stronger set of conditions than the intervals  $K\in  \mathcal W_I$.

\smallskip
For part 2,
without loss of generality, we can assume that $C\ge 3$. In this case, for any dyadic cube $K$, we have $F(K)\subset C K$, where $F(K)$ denotes the
dyadic father of $K$. Suppose that there are $ K_1 ,\dotsc, K_t \in \mathcal W_I$, with decreasing side lengths, such that
the intersection $ \bigcap _{s=1} ^{t} C K _{s} \neq 0$. We will show that $t< 2(1+\gamma^{-1})$ if $r$ is sufficiently large. In fact, if $t\ge2(1+\gamma^{-1})$, we have
\begin{eqnarray*}
\textup{dist}(F(K_t), \partial I)&\ge& \textup{dist}(C K_t, \partial I)\ge \textup{dist}(CK_1, \partial I)-C\sqrt n l(K_t)\\
                                 &\ge& \textup{dist}(K_1, \partial I)-C\sqrt n (l(K_t)+l(K_1))\\
                                 &\ge& l(K_1)^{\gamma}l(I)^{1-\gamma}-2C\sqrt n l(K_1)\\
                                 &\ge& (1-\frac{2C\sqrt n}{2^{r(1-\gamma)}})l(K_1)^{\gamma}l(I)^{1-\gamma}\\
                                 &\ge& (1-\frac{2C\sqrt n}{2^{r(1-\gamma)}})2l(F(K_t))^{\gamma}l(I)^{1-\gamma}.
\end{eqnarray*}
Let $r$ be sufficient large such that $2C\sqrt n\le 2^{r(1-\gamma)-1}$. Then
\[
  \textup{dist}(F(K_t), \partial I) \ge l(F(K_t))^{\gamma}l(I)^{1-\gamma},
\]
which is a contradiction of the definition of $K_t\in \mathcal W_I$.

\end{proof}
%%%%%%%%%%%%%%%%%%%%%%%%%%%%%% PROOF PROOF PROOF

%%%%%%%%%%%%%%%%%%%%%%%%%%%%%%  DEFINITION DEFINITION DEFINITION
\begin{definition}\label{d:pivotal}  The \emph{pivotal constant $ \mathscr P$} is the smallest constant in the following inequality.
For any cube $ I^0$, and any partition of $ I^0$ into dyadic cubes $ \{I _{\alpha } \::\: \alpha \in \mathbb N \}$, there holds
\begin{equation}\label{e:pivotal}
\sum_{\alpha \in \mathbb N  } \sum_{K\in \mathcal W _{I_ \alpha }}
P(K, \mathbf 1_{I^0}\sigma)^2 w(K)\le \mathscr{P} ^2 \sigma(I ^{0}),
\end{equation}

\end{definition}
%%%%%%%%%%%%%%%%%%%%%%%%%%%%%%  DEFINITION DEFINITION DEFINITION

We show that the $ A_2$ and testing inequalities imply  the finiteness of the pivotal constant.

%%%%%%%%%%%%%%%%%%%%%%%%%%%%%% LEMMA LEMMA LEMMA
\begin{lemma}\label{l:pivotal} There holds $ \mathscr P  \lesssim  \mathscr A_2 ^{1/2} + \mathscr T := \mathscr N$.

\end{lemma}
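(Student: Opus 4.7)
The plan is to exploit two features. First, the kernel $\partial_t P_t(z)=c_n(|z|^2-nt^2)(t^2+|z|^2)^{-(n+3)/2}$ has constant sign and satisfies $|\partial_t P_t(z)|\approx (t+|z|)^{-(n+1)}$ whenever $|z|\ge 2\sqrt n\, t$. Second, every $K\in\mathcal W_{I_\alpha}$ sits deeply inside $I_\alpha$: $\textup{dist}(K,\partial I_\alpha)\ge l(K)^\gamma l(I_\alpha)^{1-\gamma}\gtrsim 2^{r(1-\gamma)}l(K)$. Choose a constant $C=C(r,\gamma,n)$ large enough that, for every $K\in\mathcal W_{I_\alpha}$, $CK\subset I_\alpha$, $\textup{dist}(K,\partial(CK))\ge 4\sqrt n\,l(K)$, and the bounded-overlap conclusion of Proposition~\ref{p:whitney}(2) applies. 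Fix $\alpha$, and for each $K\in\mathcal W_{I_\alpha}$ split $\mathbf 1_{I^0}\sigma=\mathbf 1_{CK}\sigma+\mathbf 1_{I^0\setminus CK}\sigma$.

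The \emph{local piece} is estimated directly from the $A_2$ hypothesis: since $P(K,\mathbf 1_{CK}\sigma)\le \sigma(CK)/|K|$,
\begin{equation*}
P(K,\mathbf 1_{CK}\sigma)^2\, w(K)\le \frac{\sigma(CK)^2\,w(K)}{|K|^2}\lesssim \mathscr A_2\,\sigma(CK),
\end{equation*}
and summing over $K\in\mathcal W_{I_\alpha}$ using $\sum_K\mathbf 1_{CK}\lesssim \mathbf 1_{I_\alpha}$ gives $\lesssim \mathscr A_2\,\sigma(I_\alpha)$, which in turn sums to $\lesssim \mathscr A_2\,\sigma(I^0)$ over $\alpha$.

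For the \emph{far piece}, for every $(x,t)\in W_K$ and $y\in I^0\setminus CK$ the separation $|x-y|\ge 4\sqrt n\, t$ holds, so $\partial_t P_t(x-y)$ has one sign and is $\gtrsim (l(K)+|x-y|)^{-(n+1)}\approx P(K,\delta_y)/l(K)$ in absolute value. Integrating against the nonnegative measure $\sigma\mathbf 1_{I^0\setminus CK}$ preserves this pointwise lower bound; squaring and integrating over $W_K$, which contributes a factor $w(K)\cdot l(K)^2$, yields
\begin{equation*}
P(K,\mathbf 1_{I^0\setminus CK}\sigma)^2\, w(K)\lesssim \iint_{W_K}|\nabla P_t(\sigma\mathbf 1_{I^0\setminus CK})(x,t)|^2\, dw\, tdt.
\end{equation*}
Writing $\sigma\mathbf 1_{I^0\setminus CK}=\sigma\mathbf 1_{I^0}-\sigma\mathbf 1_{CK}$ and applying the triangle inequality, the integrand is $\lesssim |\nabla P_t(\sigma\mathbf 1_{I^0})|^2+|\nabla P_t(\sigma\mathbf 1_{CK})|^2$; the second term is handled via the crude pointwise bound $|\nabla P_t|\le c_n t^{-(n+1)}$ on $W_K$, which after integration reproduces the $\mathscr A_2\,\sigma(CK)$ contribution of the local piece.

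It remains to sum $\iint_{W_K}|\nabla P_t(\sigma\mathbf 1_{I^0})|^2\,dw\,tdt$ over $K\in\mathcal W_{I_\alpha}$ and over $\alpha$. The regions $\{W_K\}_{K\in\mathcal W_{I_\alpha}}$ are pairwise disjoint subsets of $Q(I_\alpha)$, and the Carleson boxes $\{Q(I_\alpha)\}_\alpha$ are pairwise disjoint (since the $I_\alpha$ partition $I^0$) subsets of $Q(I^0)$, so
\begin{equation*}
\sum_\alpha\sum_{K\in\mathcal W_{I_\alpha}}\iint_{W_K}|\nabla P_t(\sigma\mathbf 1_{I^0})|^2\,dw\,tdt\le \iint_{Q(I^0)}|\nabla P_t(\sigma\mathbf 1_{I^0})|^2\,dw\,tdt\le \mathscr T^2\,\sigma(I^0)
\end{equation*}
by the testing hypothesis \eqref{e:testing} applied to $I^0$ itself. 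Combining all contributions gives $\mathscr P^2\lesssim \mathscr A_2+\mathscr T^2$, as required. The main obstacle is the sign/lower-bound step: ensuring $|\partial_t P_t(x-y)|\gtrsim (l(K)+|x-y|)^{-(n+1)}$ uniformly on $W_K\times (I^0\setminus CK)$ forces one to stay away from the zero locus $|z|^2=nt^2$ of $\partial_t P_t$, which is what fixes the choice of $C$ and, in turn, the required lower bound on the goodness parameter $r$.
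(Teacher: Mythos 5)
Your proof is correct and takes essentially the same route as the paper: punch the hole $CK$ and absorb the local Poisson piece by $\mathscr A_2$ plus the bounded overlap of Proposition~\ref{p:whitney}(2), convert the far Poisson tail into $t\,\partial_t P_t$ using the constant sign and size of the kernel away from its zero locus on $W_K\times(I^0\setminus CK)$, and then invoke the testing condition on $Q(I^0)$ after re-inserting $\mathbf 1_{CK}$. The only (harmless) deviation is the final error term, which the paper bounds by testing on $Q(CK)$ while you bound it by the crude kernel estimate and $\mathscr A_2$; both work.
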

%%%%%%%%%%%%%%%%%%%%%%%%%%%%%% LEMMA LEMMA LEMMA

%%%%%%%%%%%%%%%%%%%%%%%%%%%%%% PROOF PROOF PROOF
\begin{proof}
We take the constant $ C$ in Proposition~\ref{p:whitney} to be $ C= 4 n$.  We first make a hole in the
argument of the Poisson term.
By the simple $ A_2$ bound,  there holds
\begin{align*}
\sum_{\alpha \in \mathbb N  } \sum_{K\in \mathcal W _{I_ \alpha }}
P(K, \mathbf 1_{ C K} \sigma)^2 w(K) \lesssim \mathscr A_2
\sum_{\alpha \in \mathbb N  } \sum_{K\in \mathcal W _{I_ \alpha }}  \sigma (C K) \lesssim \mathscr A_2 \sigma (I ^{0}).
\end{align*}
So, below, we can consider the Poisson terms   $ P(K, \mathbf 1_{  I ^{0} \setminus C K} \sigma) $.

Then, note that for $ x \in K$, and $ \tfrac 12 l (K) \le t \le l (K)$, there holds
\begin{equation*}
t \partial _t P _{t} ( \mathbf 1_{  I ^{0} \setminus C K} \sigma ) (x) \gtrsim P(K,\mathbf 1_{  I ^{0} \setminus C K} \sigma) .
\end{equation*}
And, so
\begin{equation*}
P(K, \mathbf 1_{ I ^{0} \setminus  C K} \sigma)^2 w(K) \lesssim \int _{K}  \int _{\tfrac 12 l (K)} ^{ l (K)}  t ^2  \lvert  \nabla  P _{t} ( \mathbf 1_{  I ^{0} \setminus C K} \sigma ) (x) \rvert ^2 \; d w  \, \frac {dt} t .
\end{equation*}
The sum over $ \alpha \in \mathbb N $ and $ K\in \mathcal W _{I_ \alpha }$ is split into two terms, splitting the argument of the
$ g$-function on the right.
The first is when the argument is $ \mathbf 1_{I ^{0}} \sigma $
\begin{align*}
\sum_{\alpha \in \mathbb N  } \sum_{K\in \mathcal W _{I_ \alpha }}
\int _{K}  \int _{\tfrac 12 l (K)} ^{ l (K)}  t ^2  \lvert  \nabla  P _{t} ( \mathbf 1_{  I ^{0} } \sigma ) (x) \rvert ^2 \; d w  \, \frac {dt} t
\lesssim \iint _{Q (I^0)} (\nabla P_t ( \mathbf 1_{I^0}\sigma)(x, t))^2 \; dw \, t dt \lesssim \mathscr T^2 \sigma (I ^{0}).
\end{align*}
The second is when the argument is $ \mathbf 1_{CK} \sigma $.  The Whitney property is again essential.
\begin{align*}
\sum_{\alpha \in \mathbb N  } \sum_{K\in \mathcal W _{I_ \alpha }}
\int _{K}  \int _{\tfrac 12 l (K)} ^{ l (K)}  t ^2  \lvert  \nabla  P _{t} ( \mathbf 1_{  CK } \sigma ) (x) \rvert ^2 \; d w  \, \frac {dt} t
&\lesssim
\sum_{\alpha \in \mathbb N  } \sum_{K\in \mathcal W _{I_ \alpha }}
\iint _{Q (CK)} (\nabla P_t (\mathbf 1_{CK}\sigma )(x, t))^2 \; dw \, t dt
\\&\lesssim \mathscr T ^2
\sum_{\alpha \in \mathbb N  } \sum_{K\in \mathcal W _{I_ \alpha }}   \sigma (C K) \lesssim \mathscr T ^2 \sigma (I ^{0}).
\end{align*}
The proof is complete.

\end{proof}
%%%%%%%%%%%%%%%%%%%%%%%%%%%%%% PROOF PROOF PROOF

%%%%%%%%%%%%%%%%%%%%%%%%%%%%%% SECTION  SECTION SECTION
\section{The proof of Theorem~\ref{t:g}: The Core}
We   prove \eqref{eq:e3}, assuming that $ \mathscr N  := \mathscr A_2 ^{1/2}  + \mathscr T  $ is finite.
The core of the proof is the estimate for the following term, which we take up in this section.
\begin{equation}\label{e:critical}
  \sum_{R\;:\; l(R)\le 2^{s-r-1}}
\iint_{W_{R}}\bigg|\sum_{k=r+1}^{s-\log_2 l(R)}\psi_t*(\Delta_{R^{(k)}}^\sigma f \cdot  1_{R^{(k-1)}}\sigma)\bigg|^2w(dx)\frac{dt}{t}.
\end{equation} 
Recall that $ f$ is of $ \sigma $-integral zero on $ Q ^{0}$.    
Indeed, we fix an integer $ 0\le r' <  r+1$, and assume that
\begin{equation*}
f = \sum_{\substack{I\subset Q^0\\ \textup{$ I$ is good, }  }} \mathbf 1_{ \log_2 l (I) = r'  \operatorname{mod} r+1} \, \Delta ^{\sigma } _{I} f .
\end{equation*}
We can then further assume that $ \log_2 l (Q^0) = r' \mod r+1$, and that it is good.
Let $ \mathcal D _{f}$ be the dyadic children of good cubes $ I \subset Q^0$ with $  \log_2 l (I) = r' \mod r+1$.

%%%%%%%%%%%%%%%%%%%%%%%%%%%%%% SUBSECTION SUBSECTION SUBSECTION SUBSECTION
\subsection{Stopping cubes}
The stopping cubes  $ \mathcal S$ are constructed this way.
Set $ \mathcal S_0$  to be all the maximal   dyadic children of $ Q^0$, which are 
 in $ \mathcal D_f$.
Then set $ \tau (S)= \mathbb E ^{\sigma  } _{S} f$, for $ S\in \mathcal S_0$.
In the recursive step,  assuming that $ \mathcal S_k$ is constructed, for $ S\in \mathcal S_k$,  set $ \textup{ch} _{\mathcal S} (S)$
to be the maximal subcubes $I\subset S$, $ I\in \mathcal D_f$,  such that either
%%  ENUMERATE
\begin{enumerate}
\item   $ E ^{\sigma } _{I} \lvert  f\rvert > 10 \tau (S)$
\item   The first condition fails, and    $  \sum_{K\in \mathcal W_I} P (K, \mathbf 1_{S} \sigma ) ^2 w (K) \ge C_0 \mathscr P ^2 \sigma (I)$.
\end{enumerate}
%% ENUMERATE
Then, define $ \mathcal S _ {k+1} := \bigcup _{S\in \mathcal S_k} \textup{ch} _{\mathcal S} (S)$, and
\begin{equation*}
\tau (\dot S) :=
\begin{cases}
 E ^{\sigma } _{\dot S} \lvert  f\rvert  &  E ^{\sigma } _{\dot S} \lvert  f\rvert > 2 \tau (S)
 \\
 \tau (S) & \textup{otherwise}
\end{cases}, \qquad  \dot S\in \textup{ch} _{\mathcal S} (S).
\end{equation*}
Finally, $ \mathcal S := \bigcup _{k=0} ^{\infty } \mathcal S_k$.
It is a useful point that $ l (\dot S) \le 2 ^{-r-1} l (S)$ for all $ \dot S\in \textup{ch} _{\mathcal S} (S)$.
In particular, it follows that
\begin{equation} \label{e:byConstruct}
\dot S ^{(1)} \subset K, \quad \textup{for some $ K\in \mathcal W _{S}$}.
\end{equation}
This holds since $ \dot S ^{(1)} $ is good, and  strongly contained in $ S$,  so that Proposition~\ref{p:whitney} gives the implication above.

 For any dyadic cube $I$, $S(I)$ will denote its father in $ \mathcal S$, the minimal cube in $ \mathcal S$ that contains it. Notice that there maybe the case  $S(I)=I$.  For any stopping cube $S$, $\mathscr{F}(S)$ will denote its father in the stopping tree, inductively, $\mathscr F^{k+1} S=\mathscr{F}(\mathscr F^k S)$.

We collect simple consequences of the construction. See for instance \cite[Lemma 3.6]{LSSU1} for the easy proof.

%%%%%%%%%%%%%%%%%%%%%%%%%%%%%% LEMMA LEMMA LEMMA
\begin{lemma}\label{l:stop-simple} These estimates hold.
%%  ENUMERATE
\begin{enumerate}
\item For all intervals $ I$,   $ \lvert  E ^{\sigma } _I f \rvert \lesssim \tau ( S (I)) $.

%\item For a sufficently large constant $ C_0$, there holds $ \sum_{S'\in \textup{ch} _{\mathcal S} (S)} \sigma (S') \le \tfrac 12 \sigma (S)$, for all $ S\in \mathcal S$.

\item  This \emph{quasi-orthogonality} bound holds:
\begin{equation}\label{e:quasi}
  \sum_{S\in \mathcal S} \tau (S) ^2  \cdot \sigma (S) \lesssim \lVert f\rVert_{\sigma } ^2 .
\end{equation}

\end{enumerate}
%% ENUMERATE

\end{lemma}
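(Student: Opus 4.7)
The plan is the standard stopping-tree/Carleson-embedding argument, with the pivotal bound of Lemma~\ref{l:pivotal} as the only nontrivial input.

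\emph{Part (1).} Fix a dyadic cube $I$ and set $S=S(I)$. If $I\subsetneq S$, minimality of $S(I)$ prevents $I$ from being in $\textup{ch}_{\mathcal S}(S)$. Since $I\in \mathcal D_f$ is not among the maximal cubes triggering a stopping event, it fails both criteria; in particular $E^\sigma_I|f|\le 10\tau(S)$, giving $|E^\sigma_I f|\le E^\sigma_I|f|\lesssim \tau(S)$. If $I=S$, inspect the two cases in the definition of $\tau$: either $\tau(S)=E^\sigma_S|f|$ directly, or the alternative branch triggered, which by construction requires $E^\sigma_S|f|\le 2\tau(\mathscr F(S))=2\tau(S)$. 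Either way $|E^\sigma_S f|\le 2\tau(S)$.

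\emph{Part (2), Step A (packing).} Split $\textup{ch}_{\mathcal S}(S)=\textup{ch}_1(S)\cup \textup{ch}_2(S)$ by which criterion triggered the stop (assigning each child to the first one it satisfies). For $S'\in \textup{ch}_1(S)$, $E^\sigma_{S'}|f|>10\tau(S)$, so using part~(1) with $I=S$,
\begin{equation*}
10\,\tau(S)\sum_{S'\in \textup{ch}_1(S)}\sigma(S')\le \sum_{S'\in\textup{ch}_1(S)}\int_{S'}|f|\,d\sigma \le \int_S|f|\,d\sigma\le 2\tau(S)\sigma(S),
\end{equation*}
giving $\sum_{\textup{ch}_1(S)}\sigma(S')\le \sigma(S)/5$. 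For $S'\in \textup{ch}_2(S)$, complete $\{S'\}$ to a dyadic partition of $S$ and invoke the pivotal inequality \eqref{e:pivotal} with $I^0=S$. Since each $S'\in\textup{ch}_2(S)$ contributes at least $C_0\mathscr P^2\sigma(S')$ to the left-hand side, we obtain $\sum_{\textup{ch}_2(S)}\sigma(S')\le C_0^{-1}\sigma(S)$. With $C_0$ chosen large, the union of all stopping children has $\sigma$-measure at most $(1-\delta)\sigma(S)$ for an absolute $\delta>0$, so that the sets $E(S):=S\setminus\bigcup \textup{ch}_{\mathcal S}(S)$ are pairwise disjoint and satisfy $\sigma(E(S))\ge \delta\sigma(S)$.

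\emph{Step B (domination by the maximal function) and Step C.} By induction down the stopping tree, $\tau(S)\le M^\sigma_{\mathcal D}|f|(x)$ for every $x\in S$: the base case at $Q^0$ is immediate, and at each step $\tau(\dot S)$ equals either $E^\sigma_{\dot S}|f|$ (bounded by $M^\sigma_{\mathcal D}|f|(x)$ for $x\in\dot S$) or $\tau(\mathscr F(\dot S))$, covered by the inductive hypothesis on $\mathscr F(\dot S)\supset \dot S$. Combining with Step~A and the disjointness of the $E(S)$,
\begin{equation*}
\sum_{S\in\mathcal S}\tau(S)^2\sigma(S)\le \delta^{-1}\sum_{S\in\mathcal S}\int_{E(S)}\tau(S)^2\,d\sigma
\le \delta^{-1}\int_{Q^0}\bigl(M^\sigma_{\mathcal D}|f|\bigr)^2\,d\sigma\lesssim \|f\|_\sigma^2,
\end{equation*}
the last step being the $L^2(\sigma)$-boundedness of the dyadic maximal function.

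The one place where real content enters is the type-(2) packing in Step~A: it is exactly there that Lemma~\ref{l:pivotal} is used, translating the $A_2$ and testing hypotheses into a Carleson estimate on the $\tau$-jump cubes. Everything else is routine stopping-time bookkeeping and a maximal function bound, which is why the paper merely cites \cite[Lemma~3.6]{LSSU1}.
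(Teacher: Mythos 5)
Your proof is correct and is exactly the standard stopping-tree argument (packing of the children via the $E^\sigma|f|$-jump count plus the pivotal/Carleson stopping criterion, then domination of $\tau$ by the dyadic maximal function) that the paper itself omits and outsources to \cite[Lemma 3.6]{LSSU1}. The only wording to tighten is in Part (1): minimality of $S(I)$ shows $I$ is not \emph{contained in} any element of $\textup{ch}_{\mathcal S}(S)$, and it is this (not merely $I\notin\textup{ch}_{\mathcal S}(S)$) that forces $I$ to fail criterion (1), since any cube satisfying a criterion lies inside a maximal such cube.
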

%%%%%%%%%%%%%%%%%%%%%%%%%%%%%% LEMMA LEMMA LEMMA

With the tool of stopping cubes, we can make the following decomposition.
\begin{equation}\label{e:critical-all}
\begin{split}  &
\sum_{k=r+1}^{s-\log_2 l(R)} (E_{R^{(k-1)}}^\sigma\Delta_{R^{(k)}}^\sigma f) \mathbf{1}_{R^{(k-1)}}
\\&= \sum_{m=1}^\infty \sum_{k=r+1}^{s-\log_2 l(R)}    \mathbf 1_{  \mathscr F ^{m}S(R^{(r)}) \subset S (R ^{(k-1)})} (E ^{\sigma } _{R ^{ (k-1)}}\Delta_{R^{(k)}}^\sigma f)\mathbf{1}_{  \mathscr F ^{m}S(R^{(r)})\setminus \mathscr F ^{m-1} S(R^{(r)})   }
\\
&\quad+\sum_{k=r+1}^{s-\log_2 l(R)}(E_{R^{(k-1)}}^\sigma\Delta_{R^{(k)}}^\sigma f) \mathbf{1}_{S(R^{(r)})}
- \sum_{k=r+1}^{s-\log_2 l(R)}(E_{R^{(k-1)}}^\sigma\Delta_{R^{(k)}}^\sigma f) \mathbf{1}_{S(R^{(k-1)})\setminus R^{(k-1)}}.
\end{split}
\end{equation}

%%%%%%%%%%%%%%%%%%%%%%%%%%%%%% SUBSECTION SUBSECTION SUBSECTION SUBSECTION
 %%%%%%%%%%%%%%%%%%%%%%%%%%%%%% SUBSECTION SUBSECTION SUBSECTION SUBSECTION
\subsection{The Paraproduct Estimate}%\label{ss.}
We consider the second term on the right in \eqref{e:critical-all}, in which we take the value of the martingale difference on $ R ^{ (k-1)}$, and pair that with the indicator of $\mathbf 1_{ S(R^{(r)})}$.
Then, for fixed $ S\in \mathcal S$,
\begin{align} \label{e:critical1}
 \sum_{\substack{R\;:\; l(R)\le 2^{s-r-1}\\ S(R^{(r)}) =S}}
\iint_{W_{R}} &\bigg|
\sum_{k=r+1}^{s-\log_2 l(R)}  \psi _t \ast  (E ^{\sigma } _{R ^{ (k-1)}}\Delta_{R^{(k)}}^\sigma f \cdot  \sigma\mathbf{1}_{   S } )\bigg|^2w(dx)\frac{dt}{t}
\\
& \lesssim \tau (S)  ^2
\iint _{Q (S)}  \bigl\lvert  \psi _t \ast (\sigma\mathbf 1_{S}) \bigr\rvert ^2 w (dx) \frac {dt} t \lesssim \mathscr N ^2 \tau (S) ^2 \sigma (S).
\end{align}
It is important to observe that the stopping value $ \tau (S)$ controls the sum over the martingale differences, permitting a simple
application of the testing condition.
And the sum over $ S$ is controlled by the quasi-orthogonality bound, \eqref{e:quasi}.

%%%%%%%%%%%%%%%%%%%%%%%%%%%%%% SUBSECTION SUBSECTION SUBSECTION SUBSECTION
 %%%%%%%%%%%%%%%%%%%%%%%%%%%%%% SUBSECTION SUBSECTION SUBSECTION SUBSECTION
\subsection{The Global Bound}%\label{ss.}
We consider the first term on the right in \eqref{e:critical-all}, which concerns the case of
 case of $ S(R^{(r)})$ and $ S (R ^{(k)})$ being separated in the $ \mathcal S$ tree.  The stopping values enter in this way.
Fix a stopping cube $ S$ and integer $ m$. Note that for a choice of constant $ \lvert  c\rvert \lesssim 1 $, there holds
\begin{equation} \label{e:m-stop}
\sum_{k=r+1}^{s-\log_2 l(R)}    \mathbf 1_{  \mathscr F ^{m}S \subset S (R ^{(k-1)})} (E ^{\sigma } _{R ^{ (k-1)}}\Delta_{R^{(k)}}^\sigma f)
\mathbf 1_{  \mathscr F ^{m}S\setminus \mathscr F ^{m-1} S   }
= c \cdot  \tau ( \mathscr F ^{m}S) \mathbf 1_{  \mathscr F ^{m}S\setminus \mathscr F ^{m-1} S}.
\end{equation}
It is important to note the restriction on $ S (R ^{(k-1)})$ above.  We will gain a geometric decay in $ m$, from the pivotal condition.

We are going to reindex the sum above.
So, consider $ \ddot S \in \mathcal S$, and write integer $ m = p+q$, where $ p =\lceil m/2\rceil$.
Consider the sub-partition of $ \ddot S $ given by $ \mathcal P (m, \ddot S) = \{ \dot S \in \mathcal S \::\:
\mathscr F ^{p} \dot S = \ddot S  \}$.
Now, for stopping cube  $ S$ with $ \mathscr F ^{q}S=\dot S$, and good $ R\Subset S$, we
have $ R\subset \dot K$ for some $ \dot K\in \mathcal W_{\dot S}$, where $ \dot S \in \mathcal P$.
Notice that we have $ R\Subset \dot K\subset \dot S $. The hypothesis of
of Lemma~\ref{l:good-gain} holds for these three intervals, by goodness of $ R$.

There holds for each $ \ddot S \in \mathcal S$,
\begin{align*}
\sum_{R \::\: \mathscr F ^{m} S (R ^{(r)})   = \ddot S}&  \iint_{W_{R}} \Biggl|
\sum_{\substack{k=r+1  \\ \ddot S \subset \mathscr F ^{(m-1)} S (R ^{(k-1)})} }^{s-\log_2 l(R)}
E ^{\sigma } _{R ^{ (k-1)}}\Delta_{R^{(k)}  } ^{\sigma } f   \cdot   \psi _t \ast   \bigl( \sigma \cdot \mathbf 1_{  \ddot S\setminus \mathscr F ^{m-1} S   } \bigr) \Biggr|^2
\;w(dx)\frac{dt}{t}
\\& \stackrel{\eqref{e:m-stop}}\lesssim
\tau (\ddot S) ^2 \sum _{\dot S \in \mathcal P (m, \ddot S)}  \sum_{\dot K\in \mathcal W _{\dot S}}
\sum_{R \::\: R \subset \dot K}
\iint_{W_{R}} |
\psi _t \ast  \bigl\{ \sigma \cdot \mathbf{1}_{\ddot S \setminus \mathscr F ^{(p-1)} \dot S    } \bigr\} 	
|^2
\;w(dx)\frac{dt}{t}
\\
& \stackrel{\eqref{e:good-gain}}\lesssim
\tau (\ddot S) ^2 \sum _{\dot S \in \mathcal P (m, \ddot S)}
 \sum_{\dot K\in \mathcal W _{\dot S}}
P (\dot K, \sigma \mathbf 1_{\ddot S}) ^2
\sum_{R \::\: R  \subset \dot K}
\Bigl[ \frac { l (R)} {l (\dot K)} \Bigr]    w (R)
\\
& \lesssim  2 ^{- m/8}
\tau (\ddot S) ^2 \sum _{\dot S \in \mathcal P (m, \ddot S)}
 \sum_{\dot K\in \mathcal W _{\dot S}}
P (\dot K, \sigma \mathbf 1_{\ddot S}) ^2  w (\dot K)
\\&
\stackrel{\eqref{e:pivotal}}\lesssim \mathscr N ^2 \cdot  2 ^{-m/8}\tau (\ddot S) ^2  \sigma (\ddot S).
\end{align*}
We have applied \eqref{e:m-stop} to bound the sum over martingale differences,  and \eqref{e:good-gain}.
It is essential to note that we gain a geometric decay in $ m$, and that we have use the pivotal condition above.
The quasi-orthogonality bound \eqref{e:quasi} controls the sum over $ \ddot S \in \mathcal S$.

The gain of the geometric factor is explained this way. We can assume that $ q $ is greater than $ 2$.
Now, $ S (R)=S$ and $ \mathscr F ^{q} S=\dot S$.  Write the stopping cubes between $ S$ and $ \dot S$ as
\begin{equation*}
R\subset
S=S_1 \subsetneq S_2 \subsetneq \cdots \subsetneq S_q := \dot S, \qquad S _{t}\in \mathcal S,\ 1\le t \le q.
\end{equation*}
Recalling \eqref{e:byConstruct}, we see that $ S _{q-1} \subset \dot K$,
for $ \dot K\in \mathcal W _{\dot S}$ as above.  So, we have  $ l (R)\le 2 ^{-q+1} l (\dot K)$.  Since $ q \simeq m/2$, we have the geometric decay in $ m$ above.

%%%%%%%%%%%%%%%%%%%%%%%%%%%%%% SUBSECTION SUBSECTION SUBSECTION SUBSECTION
 %%%%%%%%%%%%%%%%%%%%%%%%%%%%%% SUBSECTION SUBSECTION SUBSECTION SUBSECTION
\subsection{The Local Term}%\label{ss.}

We bound the third term on the right in \eqref{e:critical-all}.  The stopping rule on the pivotal condition is now essential.
Fix an $ S\in \mathcal S$, and fix a $ k \ge r$.  We will gain a geometric decay in $ k$, and sum in $ S$ with an orthogonality
bound, not quasi-orthogonality.

The central claim is the inequality below, in which we in addition fix a (good) cube  $ \dot R$ which intersects $ S$,
and child $ \ddot R$ of $ \dot R$.
\begin{align*}
 \lvert E_{\ddot R}^\sigma\Delta_{\dot R}^\sigma f \rvert ^2
\sum_{\substack{R \::\: S(\ddot R) = S\\ R ^{(k-1)} = \ddot R}} &
\iint _{W_R} \lvert  \psi _t \ast \mathbf{1}_{S\setminus \ddot R}   \rvert ^2 \; w (dx) \frac {dt} t
\lesssim 2 ^{- k/2} \mathscr N ^2  \lvert E_{\ddot R}^\sigma\Delta_{\dot R}^\sigma f \rvert ^2   \sigma (\ddot R ).
\end{align*}
It is clear that we can sum over the various fixed quantities to complete the proof in this case.

But the claim follows from construction of the stopping interval. Since $ S (\ddot R) =S$, this means that the cube $ \ddot R$
must fail the conditions of the stopping cube construction, in particular it must fail the pivotal stopping condition.  Hence, using \eqref{e:good-gain},
for each $ K\in \mathcal W _{\ddot R}$,
\begin{align*}
\sum_{\substack{R \::\: S(\ddot R) = S\\ R ^{(k-1)} = \ddot R \,,\ R\subset K}} &
\iint _{W_R} \lvert  \psi _t \ast \mathbf{1}_{S\setminus \ddot R}   \rvert ^2 \; w (dx) \frac {dt} t
\lesssim 2 ^{-k/2} P (K, \sigma \mathbf 1_{S}) ^2 w (K)
\end{align*}
And, the sum over $ K\in \mathcal W _{\ddot R} $ of this last expression can't be more than $ \mathscr N ^2 \sigma (\ddot R)$,
since $ \ddot R$ is not a stopping cube.  This completes this case.

\bigskip
The use of the argument above goes back to the paper \cite{NTV3}.
We have appealed to the Lemma below, which is one of the principal consequences of the good rectangles.  In the two weight setting,
it also appears in \cite{NTV3}, and has been used in (seemingly) every subsequent paper. (Interestingly, its role in the characterization of the
	two weight Hilbert transform \cites{LSSU1,Lac} is not nearly so central as it is in this argument.)

%%%%%%%%%%%%%%%%%%%%%%%%%%%%%% LEMMA LEMMA LEMMA
\begin{lemma}\label{l:good-gain}  Consider three cubes  $ R  \subset K \subset  S$, and function $ f $ not supported on $ S$.
If $ \textup{dist} (R, \partial K) \ge l (R) ^{\gamma } { l (K)} ^{1- \gamma }  $, then   there holds
\begin{equation}\label{e:good-gain}
\iint _{W_R} \lvert  \psi _t \ast (f \cdot \sigma )\rvert ^2 \; w (dx) \frac {dt} t
\lesssim \frac { l (R)} {l (K)}  \cdot 
P (K, \lvert  f\rvert \sigma  ) ^2  w (R).
\end{equation}
\end{lemma}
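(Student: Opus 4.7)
The plan is to reduce the proof to a pointwise bound on the Whitney cone. I will show that for every $x \in R$ and $t \in (l(R)/2, l(R)]$,
\[
|\psi_t * (f\sigma)(x)| \lesssim \Bigl(\frac{l(R)}{l(K)}\Bigr)^{1/2} P(K, |f|\sigma),
\]
after which squaring and integrating over $W_R$ yields \eqref{e:good-gain} immediately, since the $t$-integration contributes only a bounded constant and $x \mapsto w$ integration gives the factor $w(R)$.

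The pointwise bound rests on a geometric observation followed by a kernel comparison. For the geometry: since $f$ is not supported on $S$, and $R \subset K \subset S$, any $y \in \textup{supp}(f)$ lies outside $K$. For $x \in R$, convexity of the cube $K$ forces the segment $[x,y]$ to cross $\partial K$ at some point $z$, so
\[
|x-y| = |x-z| + |z-y| \ge \textup{dist}(x, \partial K) + \textup{dist}(y, K) \ge l(R)^\gamma l(K)^{1-\gamma} + \textup{dist}(y, K),
\]
where the last inequality is the hypothesis on $\textup{dist}(R, \partial K)$.

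Next, the decay condition $|\psi(u)| \le C(1+|u|)^{-n-1}$ from $\mathcal U_{1,1}$ gives $|\psi_t(u)| \le C t/(t+|u|)^{n+1}$, so combining with $t \le l(R)$,
\[
|\psi_t(x-y)| \le \frac{C l(R)}{(l(R)^\gamma l(K)^{1-\gamma} + \textup{dist}(y,K))^{n+1}}.
\]
Dividing by the Poisson kernel $l(K)/(l(K)+\textup{dist}(y,K))^{n+1}$ and using $l(R)^\gamma l(K)^{1-\gamma} \le l(K)$, the quotient is at most
\[
\frac{l(R)}{l(K)} \cdot \Bigl(\frac{l(K)}{l(R)^\gamma l(K)^{1-\gamma}}\Bigr)^{n+1} = \Bigl(\frac{l(R)}{l(K)}\Bigr)^{1-\gamma(n+1)} = \Bigl(\frac{l(R)}{l(K)}\Bigr)^{1/2},
\]
where the last equality uses the paper's choice $\gamma = 1/(2(n+1))$. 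Integrating against $|f(y)|\sigma(dy)$ then produces the pointwise claim.

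The technical heart of the argument—and really the only potential obstacle—is this exponent matching: the geometric gain $l(R)^\gamma l(K)^{1-\gamma}$ from goodness, raised to the decay power $n+1$ of $\psi_t$, absorbs exactly one half of the naive factor $t/l(K) \sim l(R)/l(K)$, leaving the clean power $(l(R)/l(K))^{1/2}$ which squares to the $l(R)/l(K)$ stated in the lemma. Every other step is a routine estimate.
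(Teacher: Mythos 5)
Your proof is correct and follows essentially the same route as the paper's: both reduce to the pointwise bound $\lvert \psi_t * (f\sigma)(x)\rvert \lesssim (l(R)/l(K))^{1/2}\, P(K, \lvert f\rvert\sigma)$ on $W_R$, obtained by trading the separation $\lvert x-y\rvert \ge l(R)^{\gamma} l(K)^{1-\gamma} + \textup{dist}(y,K)$ coming from goodness against the $(n+1)$-power decay of $\psi_t$, with $\gamma = \tfrac{1}{2(n+1)}$ yielding exactly the exponent $\tfrac12$. The only cosmetic difference is that the paper passes through $\textup{dist}(y,R)$ before comparing to the Poisson kernel centered on $K$, whereas you lower-bound $\lvert x-y\rvert$ directly; the substance is identical.
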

%%%%%%%%%%%%%%%%%%%%%%%%%%%%%% LEMMA LEMMA LEMMA

%%%%%%%%%%%%%%%%%%%%%%%%%%%%%% PROOF PROOF PROOF
\begin{proof}
For $ (x,t) \in W _R$, and $ y \not\in S$,
\begin{align*}
\lvert  \psi _t (x-y) \rvert &\lesssim \frac { l (R)} { l (R) ^{n+1} + \textup{dist} (y, R) ^{n+1} }
\\
& \lesssim  \frac {  l (R) } { l (K)} \cdot
\frac { l (K)} { l (R)  ^{n+1}+ \textup{dist} (y, R) ^{n+1} }
\\
& \lesssim
\biggl[ \frac {  l (R) } { l (K)}  \biggr] ^{1- (n+1) \gamma }
\frac { l (K)} {  l(K)^{n+1} + \textup{dist} (y, K) ^{n+1} }
\end{align*}
Recall that $ \gamma =\frac{1}{2(n+1)}$, then integrating this inequality against $ \lvert  f\rvert \cdot \sigma  $ will prove the inequality.

\end{proof}
%%%%%%%%%%%%%%%%%%%%%%%%%%%%%% PROOF PROOF PROOF

%%%%%%%%%%%%%%%%%%%%%%%%%%%%%% SECTION  SECTION SECTION
\section{The Elementary Estimates}
\subsection{Some Lemmas}
 First of all, we study the two weight inequality of the averaging operators  defined as follows.
\[
  A^\sigma_rf(x):=\frac{1}{r^n}\int_{Q(x,r)}f(y)\sigma(dy),
\]
where $Q(x,r)$ is the cube with center $x$ and side-length $r$.
We have the following well-known result.
%%%%%%%%%%%%%%%%%%%%%%%%%%%%%% LEMMA LEMMA LEMMA
\begin{Lemma}\label{lm:l1}
Suppose that $(w,\sigma)$ satisfies the Muckenhoupt $A_2$ condition.
Then
\[
  \|A^\sigma_rf\|_{L^2(w)}\le C_n\mathscr A_2 ^{1/2}\|f\|_\sigma.
\]
\end{Lemma}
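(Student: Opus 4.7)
The plan is to prove this by the classical duality/Cauchy--Schwarz argument, pushing all the weight-theoretic content into a single application of the $A_2$ hypothesis \eqref{e:A2} on a slightly enlarged cube. The result is standard, so the proof is short and the only ``trick'' is the symmetrization $y\in Q(x,r)\iff x\in Q(y,r)$ combined with the inclusion $Q(x,r)\subset Q(y,2r)$ whenever $x\in Q(y,r)$.

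First I would apply Cauchy--Schwarz in $\sigma$ pointwise:
\[
  |A^\sigma_r f(x)|^2 \le \frac{\sigma(Q(x,r))}{r^{2n}}\int_{Q(x,r)}|f(y)|^2\sigma(dy).
\]
Integrating against $w(dx)$ and using Fubini (legitimate since everything is nonnegative) to swap the order of integration, one obtains
\[
  \|A^\sigma_r f\|_{L^2(w)}^2 \le \int_{\bbR^n}|f(y)|^2\left[\int_{\{x:\,y\in Q(x,r)\}}\frac{\sigma(Q(x,r))}{r^{2n}}\,w(dx)\right]\sigma(dy).
\]

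The hard part, such as it is, is to bound the bracketed inner integral by $C_n\mathscr A_2$ uniformly in $y$. Here I would use $y\in Q(x,r)\iff x\in Q(y,r)$, so the $x$-integration runs over $Q(y,r)$, and the inclusion $Q(x,r)\subset Q(y,2r)$ (valid since both cubes have side length $r$), which gives $\sigma(Q(x,r))\le \sigma(Q(y,2r))$. Then
\[
  \int_{Q(y,r)}\frac{\sigma(Q(x,r))}{r^{2n}}\,w(dx) \le \frac{\sigma(Q(y,2r))\,w(Q(y,2r))}{r^{2n}} = 2^{2n}\,\frac{\sigma(Q(y,2r))\,w(Q(y,2r))}{|Q(y,2r)|^2}\le 2^{2n}\mathscr A_2,
\]
by the Muckenhoupt condition \eqref{e:A2} applied to the single cube $Q(y,2r)$. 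Substituting this uniform bound back yields $\|A^\sigma_r f\|_{L^2(w)}^2 \le 2^{2n}\mathscr A_2\|f\|_\sigma^2$, which is the desired estimate with $C_n=2^n$. No obstacle is expected; the only points requiring care are the symmetry of the centered-cube relation and the factor $2^n$ picked up by enlarging to $Q(y,2r)$.
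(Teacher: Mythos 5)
Your proof is correct, and it is the standard Cauchy--Schwarz/Fubini argument that the paper implicitly invokes when it states this lemma without proof as a ``well-known result''; each step (the symmetry $y\in Q(x,r)\iff x\in Q(y,r)$, the inclusion $Q(x,r)\subset Q(y,2r)$, and the application of \eqref{e:A2} to $Q(y,2r)$) checks out, yielding $C_n=2^n$.
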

%%%%%%%%%%%%%%%%%%%%%%%%%%%%%% LEMMA LEMMA LEMMA

There is a straight-forward domination of Poisson averages as sums of simple averages.
\begin{gather*}
  K_{\alpha,t}(x,y):=\frac{t^\alpha}{(t+|x-y|)^{n+\alpha}},\quad \alpha, t>0, x,y\in \bbR^n,
  \\
  \textup{and} \qquad
  I_{\alpha, t}^\sigma f (x):=\int_{\bbR^n}K_{\alpha,t}(x,y) f(y)\sigma(dy).
\end{gather*}
There holds
\begin{eqnarray*}
I_{\alpha, t}^\sigma |f| (x)&\le& \sum_{k=0}^\infty\int_{2^{k}<|x-y|/t\le 2^{k+1}}t^{-n} 2^{k(-n-\alpha)}|f(y)|\sigma(dy)\\
&&\quad+ \int_{|x-y|/t\le 1}t^{-n} |f(y)|\sigma(dy)\\
& \lesssim & \sum_{k=-1}^\infty 2^{-k\alpha}A_{2^{k+2} t}^\sigma |f| (x).
\end{eqnarray*}
It follows from Lemma~\ref{lm:l1} that
\begin{equation}\label{eq:e9}
\|I_{\alpha, t}^\sigma f\|_{L^2(w)}\le C_{n,\alpha}\mathscr A_2 ^{1/2}\|f\|_\sigma.
\end{equation}

This Lemma summarizes some standard off-diagonal considerations.
%%%%%%%%%%%%%%%%%%%%%%%%%%%%%% LEMMA LEMMA LEMMA
\begin{Lemma}\label{lm:l2}
Let
\[
  A_{QR}^\alpha:=\frac{l(Q)^{\alpha/2}l(R)^{\alpha/2}}{D(Q,R)^{n+\alpha}}\sigma(Q)^{1/2}w(R)^{1/2},
\]
where $D(Q,R)=l(Q)+l(R)+d(Q,R)$, $Q,R\in\mathcal{D}$ and $\alpha>0$. Then
for $x_Q, y_R\ge 0$, we have the following estimate on a bilinear form.
\[
  \sum_{Q,R\in\mathcal{D}}A_{QR}^\alpha x_Qy_R\lesssim \mathscr A_2 ^{1/2}\bigg(\sum_{Q\in\mathcal{D}}x_Q^2
  \times
 \sum_{R\in\mathcal{D}}y_R^2\bigg)^{1/2}.
\]
\end{Lemma}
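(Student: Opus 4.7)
The plan is to realize the bilinear form as a Schur-type estimate, matching it against the averaging operator bound \eqref{eq:e9} for $I_{\alpha,t}^\sigma$, with a geometric gain coming from the ratio of scales $l(Q)/l(R)$. Decompose by scales: write $S_{jk}$ for the contribution of pairs $(Q,R)$ with $l(Q)=2^{-j}$ and $l(R)=2^{-k}$. Since the $A_2$ condition is symmetric in $(\sigma,w)$, and \eqref{eq:e9} also holds with the roles of $\sigma$ and $w$ swapped (with the same constant), it is enough to estimate the portion $j\ge k$; the case $j<k$ is obtained by interchanging $(\sigma,x)$ and $(w,y)$.

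For $j\ge k$ fixed and $R$ of side $2^{-k}$, set $T_R:=\sum_{l(Q)=2^{-j}}A_{QR}^\alpha x_Q$. The factor $l(Q)^{\alpha/2}/l(R)^{\alpha/2}=2^{-(j-k)\alpha/2}$ pulls out cleanly. Because $l(Q)\le l(R)$, a short geometric check gives $D(Q,R)\simeq l(R)+|z-y|$ uniformly in $y\in Q$ for any chosen reference point $z\in R$, with only dimensional constants. Introduce the auxiliary function
\[
 f_j(y):=\sum_{l(Q)=2^{-j}}\frac{x_Q}{\sigma(Q)^{1/2}}\mathbf 1_Q(y),\qquad \|f_j\|_\sigma^2=\sum_{l(Q)=2^{-j}}x_Q^2,
\]
so that the identity $\sigma(Q)^{1/2}x_Q=\int_Q f_j\,d\sigma$ collapses the $Q$-sum into a single integral; recalling the definition of $I_{\alpha,t}^\sigma$, this yields
\[
 T_R\;\simeq\;2^{-(j-k)\alpha/2}\,w(R)^{1/2}\,I_{\alpha,l(R)}^\sigma f_j(z_R).
\]

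Apply Cauchy--Schwarz in $R$ and invoke \eqref{eq:e9}:
\[
 S_{jk}\le\Bigl(\sum_{l(R)=2^{-k}} y_R^2\Bigr)^{1/2}\Bigl(\sum_{l(R)=2^{-k}} T_R^2\Bigr)^{1/2}\lesssim 2^{-(j-k)\alpha/2}\Bigl(\sum_{l(R)=2^{-k}} y_R^2\Bigr)^{1/2}\Bigl(\sum_{l(R)=2^{-k}}w(R)\bigl(I_{\alpha,2^{-k}}^\sigma f_j(z_R)\bigr)^2\Bigr)^{1/2}.
\]
Since $K_{\alpha,t}(\cdot,y)$ is essentially constant across cubes of side $t$, the discrete sum on the right is bounded by $\int|I_{\alpha,2^{-k}}^\sigma f_j|^2\,dw$, and \eqref{eq:e9} controls this by $\mathscr A_2\|f_j\|_\sigma^2$. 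Thus
\[
 S_{jk}\lesssim 2^{-(j-k)\alpha/2}\,\mathscr A_2^{1/2}\Bigl(\sum_{l(Q)=2^{-j}} x_Q^2\Bigr)^{1/2}\Bigl(\sum_{l(R)=2^{-k}} y_R^2\Bigr)^{1/2}.
\]
Summing over $j\ge k$, the geometric factor $2^{-(j-k)\alpha/2}$ absorbs through Young's inequality in the scale indices, producing the target bound.

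The only genuine technical step is the discrete-to-continuous replacement $\sum_{l(R)=2^{-k}} w(R)(I_{\alpha,2^{-k}}^\sigma f_j(z_R))^2\lesssim\int|I_{\alpha,2^{-k}}^\sigma f_j|^2\,dw$, which uses the H\"older-type smoothness of $K_{\alpha,t}$ at its intrinsic scale $t$ (the reference point $z_R$ can be moved anywhere in $R$ at bounded cost). Everything else---the scale-pair decomposition, the extraction of $2^{-(j-k)\alpha/2}$, and the Young step in scales---is routine bookkeeping.
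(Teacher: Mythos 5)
Your proposal is correct and follows essentially the same route as the paper: the same scale-pair decomposition extracting the factor $2^{-|j-k|\alpha/2}$, the same reduction to the operator bound \eqref{eq:e9} via the auxiliary function $f_j$ (the paper's $h$), and the same final summation in the scale indices. The only cosmetic difference is that you linearize by evaluating $I_{\alpha,2^{-k}}^{\sigma}f_j$ at a point $z_R$ and then need the (easy) discrete-to-continuous comparison, whereas the paper keeps the dual function $\tilde h=\sum_R w(R)^{-1/2}y_R\mathbf 1_R$ inside the pairing and applies Cauchy--Schwarz in $L^2(w)$ directly, avoiding that step.
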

%%%%%%%%%%%%%%%%%%%%%%%%%%%%%% LEMMA LEMMA LEMMA

%%%%%%%%%%%%%%%%%%%%%%%%%%%%%%
\begin{proof}
We bound `half' of the form, the bound for the complementary half following by duality.  The half we consider is
\begin{align*}
\sum_{R\in\mathcal{D}}\sum_{Q\in\mathcal{D}\atop l(Q)< l(R)} &
\frac{l(Q)^{\alpha/2}l(R)^{\alpha/2}}{D(Q,R)^{n+\alpha}}\sigma(Q)^{1/2}w(R)^{1/2}x_Q y_R\\
&=\sum_{j=1}^\infty 2^{-j\alpha/2}\sum_{k\in\bbZ}\sum_{l(R)=2^k\atop l(Q)=2^{k-j}}\frac{2^{k\alpha}}{D(Q,R)^{n+\alpha}}\sigma(Q)^{1/2}w(R)^{1/2}x_Q y_R\\
&:=\sum_{j=1}^\infty 2^{-j\alpha/2}\sum_{k\in\bbZ}S_{j,k}.
\end{align*}
It suffices to obtain a uniform bound for the terms $ \sum_{k\in\bbZ} S _{j,k}$.

Set
\[
  h(x)=\sum_{Q\in\mathcal{D}\atop l(Q)=2^{k-j}}\sigma(Q)^{-1/2}x_Q 1_Q(x),\quad \tilde{h}(y)=\sum_{R\in\mathcal{D}\atop l(R)=2^k}w(R)^{-1/2}y_R 1_R(y).
\]
It is obvious that
\begin{align*}
S_{j,k}&\le\iint K_{\alpha,2^{k}}(y,x)h(x) \tilde{h}(y)\sigma(dx) w(dy)\\
&= \int_{\bbR^n}I_{\alpha, 2^k}^\sigma (h)(y) \tilde{h}(y)w (dy)\\
& \lesssim  \mathscr A_2 ^{1/2}\|h\|_\sigma\cdot\|\tilde h\|_{L^2(w)}\quad & \mbox{(by \eqref{eq:e9})}\\
&= \mathscr A_2 ^{1/2}\bigg(\sum_{Q\in\mathcal{D}, l(Q)=2^{k-n}}x_Q^2
\times \sum_{R\in\mathcal{D}, l(R)=2^k}y_R^2\bigg)^{1/2}.
\end{align*}
This is a bound for $ S _{j,k}$, and a uniform bound on $ \sum_{k\in\bbZ} S _{j,k}$ follows by Cauchy--Schwarz.  This completes the proof.

\end{proof}
%%%%%%%%%%%%%%%%%%%%%%%%%%%%%% Proof

We return to the proof of \eqref{eq:e3}, collecting estimates that are complementary to the core estimate \eqref{e:critical}.
Firstly, by the size condition of the kernel, for $(x,t)\in W_R$, we have
\begin{equation}\label{eq:size}
| \psi_t*(\Delta_Q^\sigma f \cdot \sigma)(x)|\lesssim \frac{l(R)}{(l(R)+d(Q,R))^{n+1}}\sigma(Q)^{1/2}\|\Delta_Q^\sigma f\|_\sigma.
\end{equation}
Secondly, for $Q$ with $l(Q)<2^{s}$, by the cancellation condition $\int_Q \Delta_Q^\sigma f \; \sigma (dx)=0$, we can write
\[
  \psi_t*(\Delta_Q^\sigma f \cdot \sigma)(x)=\int_Q (\psi_t(x-y)-\psi_t(x-\zeta_Q))
  (\Delta_Q^\sigma f)(y)\sigma(dy),
\]
where $\zeta_Q$ is the center of $Q$. Since $|y-\zeta_Q|\le \sqrt n l(Q)/2
\le \sqrt n l(R)/4<\sqrt n t/2\le\sqrt n l(R)/2$, we have
\begin{eqnarray*}
|\psi_t(x-y)-\psi_t(x-\zeta_Q)|&\le&
\frac{Ct^{-n}|\frac{y-\zeta_Q}{t}|}{(1+|\frac{x-y}{t}|)^{n+2}+(1+|\frac{x-\zeta_Q}{t}|)^{n+2}}\\
&=&\frac{C|y-\zeta_Q|t}{(t+|x-y|)^{n+2}+(t+|x-\zeta_Q|)^{n+2}}\\
&\lesssim&\frac{l(Q)l(R)}{(l(R)+d(Q,R))^{n+2}}.
\end{eqnarray*}
Therefore,
\begin{equation}\label{eq:derivative}
| \psi_t*(\Delta_Q^\sigma f \cdot \sigma)(x)|\lesssim \frac{l(Q)l(R)}{(l(R)+d(Q,R))^{n+2}}\sigma(Q)^{1/2}\|\Delta_Q^\sigma f\|_\sigma,\,\, (x,t)\in W_R.
\end{equation}
\subsection{The case $l(Q)<l(R)$}
In this subsection, we estimate the following.
\[
  \sum_{R\in\mathcal D_\good\atop l(R)\le 2^s}
\iint_{W_{R}}\bigg|\sum_{Q\in\mathcal{D}\atop l(Q)<l(R)}\psi_t*(\Delta_Q^\sigma f \cdot \sigma)\bigg|^2w(dx)\frac{dt}{t}.
\]
Notice that in this case, we must have $l(Q)<2^s$, by \eqref{eq:derivative} and Lemma~\ref{lm:l2},
\begin{align*}
\sum_{R\in\mathcal D_\good\atop l(R)\le 2^s}
\iint_{W_{R}} & \bigg|\sum_{Q\in\mathcal{D}\atop l(Q)<l(R)}\psi_t*(\Delta_Q^\sigma f \cdot \sigma)\bigg|^2w(dx)\frac{dt}{t}\\
&\lesssim\sum_R \bigg(\sum_Q A_{QR}^1 \|\Delta_Q^\sigma f\|_\sigma\bigg)^2 \lesssim  \mathscr A_2   \|f\|_\sigma^2.
\end{align*}

\subsection{The case $l(Q)\ge l(R)$ and $d(Q,R)>l(R)^\gamma l(Q)^{1-\gamma}$}
In this subsection, we estimate the following
\[
   \sum_{R\;:\; l(R)\le 2^s}
\iint_{W_{R}}\bigg|\sum_{Q\in\mathcal{D},l(Q)\ge l(R)\atop d(Q,R)>l(R)^\gamma l(Q)^{1-\gamma}}\psi_t*(\Delta_Q^\sigma  f \cdot \sigma)\bigg|^2\; w(dx)\frac{dt}{t}.
\]
If $l(Q)\le d(Q,R)$, it is trivial that
\[
  \frac{l(R)}{(l(R)+d(Q,R))^{n+1}}\le \frac{l(R)}{D(Q,R)^{n+1}}.
\]
If $l(Q)>d(Q,R)$, then $D(Q,R)\lesssim l(Q)$. Since $d(Q,R)>l(R)^\gamma l(Q)^{1-\gamma}$, we have
\begin{eqnarray*}
\frac{l(R)}{(l(R)+d(Q,R))^{n+1}}&\le& \frac{l(R)}{l(R)^{(n+1)\gamma}l(Q)^{(n+1)(1-\gamma)}}.
\end{eqnarray*}
Set $\gamma=\frac{1}{2(n+1)}$, we have
\begin{eqnarray*}
\frac{l(R)}{(l(R)+d(Q,R))^{n+1}}&\lesssim&
\frac{l(R)^{1/2}}{D(Q,R)^{n+1/2}}.
\end{eqnarray*}
Now it follows from \eqref{eq:size} and Lemma~\ref{lm:l2} that
\begin{align*}
\sum_{R\;:\; l(R)\le 2^s}
\iint_{W_{R}} &\bigg|\sum_{Q\in\mathcal{D},l(Q)\ge l(R)\atop d(Q,R)>l(R)^\gamma l(Q)^{1-\gamma}}\psi_t*(\Delta_Q^\sigma f \cdot \sigma)\bigg|^2w(dx)\frac{dt}{t}\\
&\lesssim \sum_R \bigg(\sum_Q A_{QR}^{1/4}\|\Delta_Q^\sigma f\|_\sigma\bigg)^2
\lesssim \mathscr A_2 \|f\|_\sigma^2.
\end{align*}

\subsection{The case $l(R)\le l(Q)\le 2^r l(R)$ and $d(Q,R)\le l(R)^\gamma l(Q)^{1-\gamma}$}
In this case, $D(Q,R)\sim l(Q) \sim l(R)$.  Then by \eqref{eq:size} and Lemma~\ref{lm:l2} again,
\begin{eqnarray*}
\sum_{R\;:\; l(R)\le 2^s}
\iint_{W_{R}}\bigg|\sum_{l(R)\le l(Q)\le 2^r l(R)\atop d(Q,R)\le l(R)^\gamma l(Q)^{1-\gamma}}\psi_t*(\Delta_Q^\sigma  f \cdot \sigma)\bigg|^2w(dx)\frac{dt}{t}
\lesssim \mathscr  A_2 \|f\|_\sigma^2.
\end{eqnarray*}

%%%%%%%%%%%%%%%%%%%%%%%%%%%%%% SUBSECTION SUBSECTION SUBSECTION SUBSECTION
\subsection{The case $l(Q)> 2^r l(R)$ and $d(Q,R)\le l(R)^\gamma l(Q)^{1-\gamma}$}
In this case, since $R$ is good, we must have $R\subset Q$.
But, recalling \eqref{e:critical-all}, we have already accounted for the term $ (\Delta_{R^{(k)}}^\sigma f) \mathbf 1_{R^{(k-1)}}$.
Accordingly, here we need only consider the term $ \mathbf 1_{R^{(k)}\setminus R^{(k-1)}} \Delta_{R^{(k)}}^\sigma f$.

The estimate \eqref{e:good-gain} applies as below, in which we hold the summing variable $ k\ge r+1$ constant.
\begin{align*}
\sum_{R \::\:  l(R)\le 2^{s-r-1}} &
\iint_{W_{R}}\big| \psi_t*((\mathbf 1_{R^{(k)}\setminus R^{(k-1)}}\Delta_{R^{(k)}}^\sigma f)\sigma)\big|^2w(dx)\frac{dt}{t}\\
&\lesssim2^{-k}  \sum_{R \::\:  l(R)\le 2^{s-r-1} } P (R ^{(k)}, \lvert \Delta_{R^{(k)}}^\sigma f \rvert \cdot \sigma  ) ^2 \cdot {w(R)}
\\
&\lesssim 2^{-k}
 \sum_{I}  \lVert \Delta ^{\sigma } _{I} f\rVert_{\sigma } ^2 \frac {\sigma (I)} {\lvert  I \rvert } \cdot \frac {w (I)} {\lvert  I\rvert }
\lesssim  2^{-k}  \mathscr A_2 \|f\|_\sigma^2,
\end{align*}
where   we reindexed the sum over $ R$ above.
With the geometric decay in $ k$, this estimate clearly completes the proof.

%%%%%%%%%%%%%%%%%%%%%%%%%%%%%% SECTION  SECTION SECTION
\section{The Intrinsic Square Function}\label{sec:s1}
In this section, we will expand our previous results to the intrinsic square functions.
For $0<\alpha\le 1$, let $\mathcal{C}_\alpha$ be the family of functions supported in $\{x: |x|\le 1\}$, satisfying $\int \varphi=0$, and such that for all $x$ and $x'$,
$|\varphi(x)-\varphi(x')|\le |x-x'|^\alpha$. If $f\in L_{\rm{loc}}^1(\bbR^n)$ and
$(y,t)\in \bbR_+^{n+1}$, we define
\[
  A_\alpha(f)(y,t)=\sup_{\varphi\in\mathcal{C}_\alpha}|f*\varphi_t(y)|.
\]
Then the intrinsic square function is defined by
\[
  G_{\beta,\alpha}(f)(x)=\bigg(\int_{\Gamma_\beta(x)}A_\alpha(f)(y,t)^2\frac{dy dt}{t^{n+1}}\bigg)^{1/2},
\]
where $\Gamma_\beta(x)=\{(y,t): |y-x|<\beta t\}$. If $\beta=1$, set $G_{1,\alpha}(f)=G_\alpha(f)$.
The intrinsic $g$-function is defined by
\[
  g_\alpha(f)(x)=\bigg(\int_{0}^\infty A_\alpha(f)(x,t)^2\frac{ dt}{t}\bigg)^{1/2}.
\]
We mention some properties on $G_{\beta,\alpha}$ and $g_{\alpha}$.  And we
refer the readers to \cite{W} for details.
\begin{enumerate}
\item $G_{\beta,\alpha}(f)\sim G_\alpha(f) \sim g_\alpha(f)$.
\item The classic $g$-function satisfies $g(f)\le c G_\alpha(f)$.
\end{enumerate}
Now we can state our result on the intrinsic $g$-function, which is equivalent
to the intrinsic square function by the above properties.
\begin{Theorem}
Suppose that $w$ and $\sigma$ are positive Borel measures on $\bbR^n$. Then $g_{\alpha}(\cdot\sigma)$ is bounded from $L^2(\sigma)$ to $L^2(w)$ provided that
\begin{enumerate}
\item $(w,\sigma)$ satisfies the Muckenhoupt $A_2$ condition;
\item the testing condition holds, i.e.
\[
  \mathcal{T}^2=\sup_{R: \mbox{cubes in $\bbR^n$}}\frac{1}{\sigma(R)}\iint_{Q(R)} A_\alpha(\sigma 1_R)(x,t)^2 w(dx)\frac{dt}{t}<\infty.
\]
\end{enumerate}
Moreover,
$
  \|g_{\alpha}(\cdot\sigma)\|_{L^2(\sigma)\rightarrow L^2(w)}
    \lesssim \mathscr A_2 ^{1/2} +\mathcal{T}
$.

\end{Theorem}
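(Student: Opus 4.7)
The plan is to follow the architecture of the proof of Theorem~\ref{t:g}, exploiting the fact that functions in $\mathcal{C}_\alpha$ are compactly supported. I expect this to trivialize all long-range stopping-tree contributions, reducing the proof to the paraproduct estimate plus two elementary off-diagonal $A_2$ cases. First, by a standard measurable selection, choose for each $(x,t)\in\bbR^n\times(0,\infty)$ a function $\varphi^{(x,t)}\in\mathcal{C}_\alpha$ with $A_\alpha(f\sigma)(x,t)\le 2|(\varphi^{(x,t)})_t\ast(f\sigma)(x)|$, and abbreviate $\psi_t:=(\varphi^{(x,t)})_t$, depending measurably on $(x,t)$. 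Uniformly in the selection,
\begin{equation*}
|\psi_t(y)|\le t^{-n}\mathbf 1_{|y|\le t},\qquad \int\psi_t=0,\qquad |\psi_t(y)-\psi_t(y')|\le t^{-n-\alpha}|y-y'|^\alpha,
\end{equation*}
and the subsequent estimates depend on $\psi_t$ only through these three bounds, so the linearization is cosmetic.

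With this in hand, I would run the random-grid reduction \eqref{e:goodWR}, martingale decomposition, stopping-tree construction and decomposition \eqref{e:critical-all}, with goodness parameter $\gamma=\frac{1}{2(n+\alpha)}$. The \emph{paraproduct} term (second sum in \eqref{e:critical-all}) is handled by the testing condition exactly as in Section~4. The \emph{global} and \emph{local} terms (first and third sums) vanish identically: in each case one needs $\psi_t\ast(f\sigma)(x)$ with $(x,t)\in W_R$ and $f$ supported outside a cube $S$ with $R\subsetneq S$, and compact support forces $|x-y|\le t\le l(R)$, whence $\dist(y,R)\le l(R)$; but goodness of $R$ gives $\dist(R,\partial S)\ge l(R)^\gamma l(S)^{1-\gamma}\ge 2^{1-\gamma} l(R)>l(R)$ as soon as $l(S)\ge 2l(R)$, placing every contributing $y$ inside $S$ where $f\equiv 0$. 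Neither the pivotal condition nor Lemma~\ref{l:good-gain} plays any role.

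The same argument disposes of the $\mathbf 1_{R^{(k)}\setminus R^{(k-1)}}\Delta_{R^{(k)}}^\sigma f$ piece treated in Subsection~5.5 (supported outside the larger cube $R^{(k-1)}$) and of the off-diagonal subcase $l(Q)\ge l(R)$, $d(Q,R)>l(R)^\gamma l(Q)^{1-\gamma}$ (since then $d(Q,R)>l(R)$ forces the integrand to zero). The two surviving off-diagonal subcases $l(Q)<l(R)$, and $l(R)\le l(Q)\le 2^r l(R)$ with $d(Q,R)\le l(R)^\gamma l(Q)^{1-\gamma}$, are handled by the intrinsic size bound
\begin{equation*}
|\psi_t\ast(\Delta_Q^\sigma f\cdot\sigma)(x)|\lesssim \frac{l(R)^\alpha}{(l(R)+d(Q,R))^{n+\alpha}}\sigma(Q)^{1/2}\|\Delta_Q^\sigma f\|_\sigma
\end{equation*}
together with the companion derivative bound obtained from cancellation of $\Delta_Q^\sigma f$ and the H\"older-$\alpha$ modulus of $\varphi^{(x,t)}$, plugged into Lemma~\ref{lm:l2} (already stated for general $\alpha>0$) to produce the required $\mathscr A_2\|f\|_\sigma^2$ bound.

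The main obstacle I anticipate is the measurable-selection setup for $\{\varphi^{(x,t)}\}$ together with the uniform verification that every estimate in Sections~2--5 survives with the pointwise-varying kernel; this is routine but must be recorded once at the outset. Once in place, the intrinsic case is strictly easier than the classical one, with the compact support of $\mathcal{C}_\alpha$ functions doing all the geometric work previously carried by the pivotal condition and Lemma~\ref{l:good-gain}.
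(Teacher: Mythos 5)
Your proposal is correct, and for the sufficiency it takes a genuinely different, and in fact cleaner, route than the one the paper gestures at. The paper's sketch says that testing plus $A_2$ again yield the pivotal condition and that "the remaining details are much like the proof given," i.e.\ it intends to rerun the full machine, including the global and local terms controlled via the pivotal condition and Lemma~\ref{l:good-gain}. You instead observe that since every $\varphi\in\mathcal C_\alpha$ is supported in the unit ball, $\psi_t(x-\cdot)$ for $(x,t)\in W_R$ only sees points within distance $t\le l(R)$ of $R$; in the global term, the local term, the piece $\mathbf 1_{R^{(k)}\setminus R^{(k-1)}}\Delta^\sigma_{R^{(k)}}f$ of Subsection~5.5, and the separated off-diagonal case, the argument of the convolution is supported outside a cube $S\supset R$ whose boundary is at distance greater than $l(R)$ from $R$, so these terms vanish identically. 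What remains is the paraproduct term (handled verbatim by the intrinsic testing condition, since $|\psi_t\ast(\sigma\mathbf 1_S)|\le A_\alpha(\sigma\mathbf 1_S)$ and the Whitney regions $W_R$ with $S(R^{(r)})=S$ tile inside $Q(S)$) and the two near-diagonal cases, which go through Lemma~\ref{lm:l2} exactly as in Section~5 once you replace \eqref{eq:size}--\eqref{eq:derivative} by their compact-support/H\"older-$\alpha$ analogues. This buys a proof that never invokes the pivotal condition or Lemma~\ref{l:good-gain} for the intrinsic class, and it sidesteps the (slightly delicate) point of deducing a pivotal-type condition from the intrinsic testing condition, whose kernels cannot reach outside the Carleson box. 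The linearization via a measurable near-optimal selection over a countable dense subset of $\mathcal C_\alpha$ is standard, as you say.

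One small correction: goodness of $R$ only controls $d(R,\partial S)$ for cubes $S$ with $l(S)\ge 2^r l(R)$, not merely $l(S)\ge 2l(R)$ as you wrote; the bound $d(R,\partial S)>l(R)^\gamma l(S)^{1-\gamma}\ge 2^{r(1-\gamma)}l(R)>l(R)$ is what you should quote. This does not damage the argument, since in every instance where you invoke the vanishing the relevant containing cube is $R^{(k-1)}$ with $k\ge r+1$, or a stopping ancestor of $S(R^{(r)})\supseteq R^{(r)}$, so its side length is indeed at least $2^r l(R)$; but the statement as written is not what goodness gives.
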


Since the intrinsic square function dominates the classical one, the boundedness of the intrinsic square function
implies the Muckenhoupt $ A_2$ condition. This also implies that the testing condition and $ A_2$ condition
imply the pivotal condition. The remaining details of the proof of sufficiency are much like the proof given,
so we omit the easy details.

\begin{bibdiv}
\begin{biblist}
\bib{2912709}{article}{
   author={Hyt{\"o}nen, Tuomas P.},
   title={The sharp weighted bound for general Calder\'on-Zygmund operators},
   journal={Ann. of Math. (2)},
   volume={175},
   date={2012},
   number={3},
   pages={1473--1506},
   %issn={0003-486X},
   %review={\MR{2912709}},
   %doi={10.4007/annals.2012.175.3.9},
}

\bib{Lac}{article}{
   author={Lacey, Michael T.},
     	  title={Two Weight Inequality for the Hilbert Transform: A Real Variable Characterization, II},
    journal={Submitted},
  eprint={http://www.arxiv.org/abs/1301.4663},
  date={2013},
  }

  \bib{Lac1}{article}{
   author={Lacey, Michael T.},
    title={The Two Weight Inequality for the Hilbert Transform: A Primer},
    journal={Submitted},
  eprint={http://www.arxiv.org/abs/1304.5004},
  date={2013},
  }

  \bib{LM}{article}{
   author={Lacey, Michael T.},
   author={Martikainen, Henri},
    title = {Local Tb theorem with L2 testing conditions and general measures: Square functions},
   eprint = {http://www.arxiv.org/abs/1308.4571},
}

\bib{LSSU1}{article}{
    author={Lacey, Michael T.},
    author={Sawyer, Eric T.},
    author={Uriarte-Tuero, Ignacio},
    author={Shen, Chun-Yen},
          title={Two Weight Inequality for the Hilbert Transform: A Real Variable Characterization, I},
    eprint={http://www.arxiv.org/abs/1201.4319},
    journal={Submitted},
          }

          \bib{LSU1}{article}{
  author={Lacey, Michael T.},
    author={Sawyer, Eric T.},
      author={Uriarte-Tuero, Ignacio},
  title={A Two Weight Inequality for the Hilbert transform Assuming an Energy Hypothesis},
  journal={J Funct Anal},
  volume={263},
  pages={305--363},
  date={2012},
 }

\bib{Ler3}{article}{
   author={Lerner, Andrei K.},
   title={Sharp weighted norm inequalities for Littlewood-Paley operators
   and singular integrals},
   journal={Adv. Math.},
   volume={226},
   date={2011},
   number={5},
   pages={3912--3926},
   %issn={0001-8708},
   %review={\MR{2770437 (2012c:42048)}},
   %doi={10.1016/j.aim.2010.11.009},
}

\bib{MM}{article}{
   author={Martikainen, Henri},
   author={Mourgoglou, Mihalis},
   title={Square functions with general measures},
   journal={Proc. AMS, to appear},
   eprint={http://arxiv.org/abs/1212.3684},
}

\bib{NTV}{article}{
  author={Nazarov, F.},
  author={Treil, S.},
  author={Volberg, A.},
  title={The $Tb$-theorem on non-homogeneous spaces},
  journal={Acta Math.},
  volume={190},
  date={2003},
  number={2},
  pages={151--239},
}

\bib{NTV3}{article}{
  author={Nazarov, F.},
  author={Treil, S.},
  author={Volberg, A.},
  title={Two weight estimate for the Hilbert transform and Corona decomposition for non-doubling measures},
  date={2004},
  eprint={http://arxiv.org/abs/1003.1596},
}

\bib{Saw1}{article}{
  author={Sawyer, Eric T.},
  title={A characterization of a two-weight norm inequality for maximal operators},
  journal={Studia Math.},
  volume={75},
  date={1982},
  number={1},
  pages={1--11},
 %review={\MR {676801 (84i:42032)}},
}

\bib{Saw2}{article}{
  author={Sawyer, Eric T.},
  title={A characterization of two weight norm inequalities for fractional and Poisson integrals},
  journal={Trans. Amer. Math. Soc.},
  volume={308},
  date={1988},
  number={2},
  pages={533--545},
 %review={\MR {930072 (89d:26009)}},
}

\bib{SSU1}{article}{
  author={Sawyer, Eric T.},
  author={Uriarte-Tuero, Ignacio},
  author={Shen, Chun-Yen},
  title={A two weight theorem for fractional singular integrals in higher dimension},
  eprint={http://www.arxiv.org/abs/1305.5104},
  journal={Submitted},
}

\bib{V}{book}{
  author={Volberg, A.},
  title={Calder\'on-Zygmund capacities and operators on nonhomogeneous spaces},
  series={CBMS Regional Conference Series in Mathematics},
  volume={100},
  publisher={Published for the Conference Board of the Mathematical Sciences, Washington, DC},
  date={2003},
  pages={iv+167},
 %isbn={0-8218-3252-2},
}

\bib{W}{book}{
   author={Wilson, Michael},
   title={Weighted Littlewood-Paley theory and exponential-square
   integrability},
   series={Lecture Notes in Mathematics},
   volume={1924},
   publisher={Springer},
   place={Berlin},
   date={2008},
   pages={xiv+224},
  % isbn={978-3-540-74582-2},
  % review={\MR{2359017 (2008m:42034)}},
}

\bib{MR1897458}{article}{
   author={Wittwer, Janine},
   title={A sharp estimate on the norm of the continuous square function},
   journal={Proc. Amer. Math. Soc.},
   volume={130},
   date={2002},
   number={8},
   pages={2335--2342 (electronic)},
   issn={0002-9939},
   review={\MR{1897458 (2003j:42009)}},
   doi={10.1090/S0002-9939-02-06342-6},
}

\end{biblist}
\end{bibdiv}

\end{document}